\def\s{\mathbb{S}}
\def\R{\mathbb{R}}
\def\Z{\mathbb{Z}}
\def\Q{\mathbb{Q}}
\def\C{\mathbb{C}}
\def\H{\mathbb{H}}
\definecolor{alertmanzano}{rgb}{0.8,0,0.3}
\newcommand{\alertm}[1]{%
	\marginpar{%
		\ifodd\value{page} \raggedright \else \raggedleft \fi
		\footnotesize{\textcolor{alertmanzano}{#1}}
	}
}
\DeclareMathOperator{\sech}{sech}
\newtheorem{theorem}{Theorem}[section]
\newtheorem{corollary}[theorem]{Corollary}
\theoremstyle{definition}
\newtheorem{definition}[theorem]{Definition}
\newtheorem{remark}[theorem]{Remark}
\newtheorem{example}[theorem]{Example}
\numberwithin{equation}{section}
\begin{document}

\title[Helicoidal minimal surfaces in $\s^3$: An approach via spherical curves]{Helicoidal minimal surfaces in the 3-sphere: 
	\\ An approach via spherical curves}

\author[I. Castro]{Ildefonso Castro}
\address{Departamento de Matem\'{a}ticas \\
Universidad de Ja\'{e}n \\
23071 Ja\'{e}n, Spain and Instituto de Matem\'{a}ticas, Universidad de Granada (IMAG) \\
Orcid 0000-0003-3853-4967.
} 
\email{icastro@ujaen.es}

\author[I. Castro-Infantes]{Ildefonso Castro-Infantes}
\address{Departamento de Matem\'{a}ticas  \\
Universidad de Murcia \\
30100 Murcia, Spain \\
Orcid 0000-0002-5379-0566.} 
\email{ildefonso.castro@um.es}

\author[J. Castro-Infantes]{Jes\'{u}s Castro-Infantes}
\address{Departamento de Matem\'atica Aplicada a las Tecnolog\'{\i}as de la Informaci\'on y las Comunicaciones \\
Universidad Politécnica de Madrid, Campus de Montegangedo \\
28660 Boadilla del Monte (Madrid), Spain \\
Orcid 0000-0003-1893-4453.} 
\email{jesus.castro@upm.es}

\thanks{The first and third authors are partially supported by State Research Agency and European
	Regional Development Fund via the grants no. PID 2020-117868GB-I00 and PID 2022-142559NB-I00, and Maria de Maeztu Excellence Unit IMAG CEX2020-001105-M funded by MCIN/AEI/10.13039/ 501100011033/
The second author is partially supported by the grant PID2021-124157NB-I00 funded by MCIN/AEI/10.13039/501100011033/ ‘ERDF A way of making Europe’, Spain; and by Comunidad Aut\'{o}noma de la Regi\'{o}n de Murcia, Spain, within the framework of the Regional Programme in Promotion of the Scientific and Technical Research (Action Plan 2022), by Fundaci\'{o}n S\'{e}neca, Regional Agency of Science and Technology, REF, 21899/PI/22.}

\subjclass[2010]{Primary 53A04; Secondary 74H05}

\keywords{Helicoidal surfaces, mean curvature, minimal surfaces, catenoids, helicoids, 3-sphere}

\date{}

\begin{abstract}
We  prove an existence and uniqueness theorem about spherical helicoidal (in particular, rotational) surfaces with prescribed  mean or Gaussian curvature in terms of a continuous function depending on the distance to its axis. 
As an application in the case of vanishing mean curvature, it is shown that the well-known conjugation between the belicoid and the catenoid in Euclidean three-space extends naturally to the 3-sphere to their spherical versions and determine in a quite explicit way their associated surfaces in the sense of Lawson. 
As a key tool, we use the notion of spherical angular momentum of the spherical curves that play the role of profile curves of the minimal helicoidal surfaces in the 3-sphere. 
\end{abstract}

\maketitle



\section{Introduction}

The study of minimal surfaces is one the most important research fields in differential geometry.
Minimal surfaces locally minimize area
and are characterized as surfaces whose mean curvature vanishes everywhere.
It is particularly interesting to study minimal surfaces in $3$-manifolds of constant curvature, such as the Euclidean space $\R^3$, the hyperbolic space $\H^3$, and the sphere $\s ^3$.

Minimal surfaces in $\R^3$ have been an intensively studied classic topic (see e.g.\ \cite{MP12} and references therein): 
Euler discovered in 1741 that a catenary rotating around a suitable axis provides a surface, called the {\em catenoid}, which minimizes area among surfaces of revolution after prescribing boundary values.
The catenoid is the only minimal surface of revolution, up to the plane (Bonnet, 1860).
The {\em helicoid} was first proved to be minimal by Meusnier in 1776. Catalan showed in 1842 that the helicoid, together with the plane, are the only ruled minimal surfaces.
We also emphasize the existence of a $1$-parameter family of minimal isometric surfaces (the so called {\em associated} surfaces) connecting the catenoid and the helicoid, which are said to be {\em conjugate} each other. The associated surfaces  are precisely 
the minimal helicoidal surfaces studied by Scherk in 1834.

In this paper we analyze in depth the possible spherical version of the above results. To do so, we will first have to determine which surfaces play the role of helicoids and catenoids in the 3-sphere.

Differently as happen in $\R^3$, 
interesting examples of {\em closed} minimal surfaces exist in $\s^3$ (see e.g.\ \cite{Br13b} and references therein): 
The simplest is the totally geodesic {\em equatorial 2-sphere}.
Almgren proved in 1966 that it is the only immersed minimal surface of genus $0$ in $\s^3$.
Another important example of a closed minimal surface in $\s^3$ is the {\em Clifford torus}.
Brendle proved in \cite{Br13a} that 
it is the only embedded minimal surface in $\s^3$ of genus $1$, thereby giving an affirmative answer to Lawson's Conjecture (cf.\ \cite{La70}).

In 1970, Lawson constructed in \cite[Theorem 3]{La70} an explicit infinite family of immersed minimal tori
in $\s^3$ which fail to be embedded, see \eqref{LawHelicoids}. 
We recall (\cite[Proposition 7.2]{La70}) that 
they are the only geodesically ruled minimal surfaces in $\s^3$ and so it is natural that they are  referred to as {\em Lawson spherical helicoids}.

To find the spherical version of the catenoids, it is therefore natural to concentrate on the rotational minimal surfaces of $\s^3$. 
Do Carmo and Dajczer coin the term ``catenoids" in \cite{dCD83} for those minimal rotational surfaces both in  $\H^3$  and in $\s ^3$, similar to what happens in $\R^3$. They recover the differential equation to be satisfied by the generatrix curve of the corresponding minimal surface of revolution and call it catenary. 
See also \cite[Theorem A]{Ri89}.
In \cite[Remark (3.34)]{dCD83}, it is commented that a conjugate correspondence can be established between the catenoids in $\s^3$  and the Lawson spherical helicoids. And, in \cite[Remark (3.35)]{dCD83}, it is mentioned that it is possible to determine explicitly the associated family of the catenoids in $\s^3$  but the authors will not go into that in the paper.

On the other hand, we must mention that  rotational minimal surfaces in $\s^3$ were also determined in a quite implicit way  in \cite{Ot70}. The constructed compact surfaces were named {\em Otsuki tori} by Penskoi in \cite{Pn13}, who showed that the Otsuki tori provide an extremal metric for the invariant spectral functional.
 In \cite{La70}, it is remarked that the Lawson helicoids are not the ones known to T.\ Otsuki \cite{Ot70} or E.\ Calabi \cite{Ca67}.

But we must also pay attention to a recent description of immersed rotationally symmetric minimal tori in $\s^3$ (see \cite[Theorem 1.4]{Br13b}, pointed out by Robert Kusner, according to the author). These surfaces are not embedded, but they turn out to be immersed in the sense of Alexandrov,
providing a large family of Alexandrov immersed minimal surfaces in $\s^3$. We will call them {\em Brendle-Kusner tori}.
Brendle shows in \cite{Br13c} that any minimal torus which is immersed in the sense of Alexandrov must be rotationally symmetric. 

Therefore, our aim in this article will be to fully justify which surfaces of $\s ^3$  deserve the appellation of {\em spherical catenoids} and why. In this way, we unify and clarify the different examples of rotational minimal surfaces in $\s^3$ that one can find in the literature. In addition, we will show in detail that the conjugation between the belicoid and the catenoid in $\R^3$ extends to $\s^3$. In fact, we will determine explicitly their associated family.

In order to achieve this objective, we deal in Section \ref{Sect:DefHel} with helicoidal surfaces in $\mathbb S^3$. This class includes the rotational ones. 
As a key tool, we introduce in Section \ref{Sect:Momentum} the notion of {\em spherical angular momentum} of a spherical curve, which completely determines it in relation with its relative position with respect to a fixed geodesic (see Theorem \ref{K determines}).
We compute it for the classical spherical catenaries studied by Bobillier and Gudermann in nineteenth century. Then we introduce a one-parameter family of \textit{spherical catenoids} in Definition \ref{def:catenoid} just as the rotational surfaces generated by these spherical catenaries. And they turn to be the minimal rotational surfaces in $\s^3$ considered in \cite{dCD83} and \cite{Ri89}.

The spherical angular momentum of the profile \textit{spherical} curve of a helicoidal surface
will play a fundamental role since it determines the geometry of the helicoidal surface, joint to its pitch, according to Corollary \ref{cor:Kkey}. 
So we are able to prove an existence and uniqueness result (Theorem \ref{Th:PrescribedHK}) showing that if we prescribe the mean or the Gaussian curvature of a spherical helicoidal (in particular, rotational) surface in terms of a continuous function depending on the distance to its axis,  once the pitch is set we get a one parameter family of helicoidal surfaces with this prescribed mean or Gaussian curvature determined by the spherical angular momenta of their profile curves.

As a first application, in Theorem \ref{Th:Rotmin} we perform in an easy direct way the local classification of the rotational minimal surfaces of $\s^3$ arriving at the spherical catenoids, together with the totally geodesic sphere and the Clifford torus that appear as limiting cases. Moreover, in Theorem \ref{Th:Rotmin}, we also identify both Otsuki and Brendle-Kusner tori with the \textit{compact} spherical catenoids, resulting the only  Alexandrov immersed minimal tori in $\s^3$.  

In Theorem \ref{Th:Cat to Hel}, we make a first approach to the assertion of \cite[Remark (3.34)]{dCD83} by showing explicitly that each spherical catenoid is locally isometric to two, and only two, spherical Lawson helicoids and, as expected, the Clifford torus appears as a self-conjugate surface (see Remarks \ref{h1 Clifford} and \ref{h1Cli}).

Finally, using our description of all the minimal helicoidal surfaces in $\mathbb S^3$ in terms of their spherical angular momenta, we prove in Theorem \ref{Th:minimal associated} that they are the {\em associated} surfaces  (in the sense of \cite{La70}) to the spherical catenoids and confirm the conjugation between spherical catenoids and Lawson spherical helicoids suggested in Theorem \ref{Th:Cat to Hel}. In this way, we carry out the statement of \cite[Remark (3.35)]{dCD83} in detail.

\medskip

{\em Acknowledgments.} The authors are very grateful to Marcos Dajczer, Jos\'{e} Miguel Manzano, Pablo Mira,  Miguel S\'{a}nchez, Francisco Torralbo and Francisco Urbano for helpful and fruitful discussions.



\section{Helicoidal and rotational surfaces in the 3-sphere} \label{Sect:DefHel}

Throughout this paper we will identify the 3-sphere $\s^3$ with the unit sphere in $\R^4$; that is,
$$
\s^3 = \{ (x_1,x_2,x_3,x_4)\in \R^4 : x_1^2+x_2^2+x_3^2+x_4^2=1 \}.
$$ 

A {\em helicoidal surface} in $\s^3$ is a surface invariant under the action of the helicoidal 1-parameter group of isometries given by the composition of a translation and a rotation in $\s^3$. Specifically, for any $a\in \R$, let us denote by $\psi_a(t)$ the translation along the great circle $c_0=\{ (x_1,x_2,0,0)\in \s^3 \}$ given by
$$
\psi_a(t)= \left(
\begin{array}{cccc}
\cos at & -\sin at & 0 & 0 \\
\sin at & \cos at & 0 & 0 \\
0 & 0 & 1 & 0 \\
0 & 0 & 0 & 1
\end{array}
\right),
$$
and for any $b\neq 0$, by $\phi_b(t)$ the rotation around $c_0$ given by
$$
\phi_b(t)= \left(
\begin{array}{cccc}
1 & 0 & 0 & 0 \\
0 & 1 & 0 & 0 \\
0 & 0 & \cos bt & -\sin bt \\
0 & 0 & \sin bt & \cos bt 
\end{array}
\right).
$$
We remark that the group $\phi_b(t)$ fixes the great circle $c_0$ and the orbits are circles centered on $c_0$.

Hence, a helicoidal surface (along the great circle $c_0$) is a surface invariant under $\psi_a(t)\circ \phi_b(t)$, for fixed {\em translation velocity} $a\in \R$  and {\em rotation velocity} $b\neq 0$. It can be locally parameterized by 
\begin{equation}\label{eq:Xab}
X_b^a(\xi)(s,t)=\psi_a(t) (\phi_b(t) \, \xi (s)),
\end{equation}
where $\xi : I \subseteq \R \rightarrow \s^2_+$ is a regular curve in the half totally geodesic sphere $\s^2_+ = \{ (x_1,x_2,x_3,0)\in \s^3 \, : x_3>0 \} $ of $\s^3$, called the {\em profile curve} (cf.\ \cite{MdS19}).

\begin{remark}\label{ab}
	Up to the change of parameter $\tilde t = b\, t$ we could consider simply $b=1$. In other words, for a helicoidal surface $X^a_b(\xi)$ the essential parameter is the ratio $h=a/b\in \R$ called the {\em pitch}. For this reason, we will simply denote $X^h=X^a_b$ when we choose $b=1$ and so $a=h$. In addition, 
	there is no restriction if we consider $a\geq 0$ and $b>0$; that is, $h\geq 0$.
\end{remark}

If $h=0$, $\psi_0$ is the identity and then $X^0(\xi)(s,t)=\phi_1(t) \, \xi (s)$ is a {\em rotational surface} with axis $c_0$ and the profile curve $\xi $ is its {\em generating curve}.

If $h=1$, the isometries $\psi_1(t)\circ \phi_1(t)$ are usually known as {\em Clifford translations}. In this case, the orbits are all great circles, which are equidistant from each other, and coincide with the fibers of the Hopf fibration $\pi \colon \s^3 \rightarrow \s^2 $. 

If we put $\xi (s) =(x(s),y(s),z(s),0)$, with $x(s)^2+y(s)^2+z(s)^2=1$ and $z(s)>0$, $s\in I \subseteq \R $, we can write
\begin{equation}\label{eq:Xh}
\begin{array}{c}
X^h(\xi)(s,t):=X_1^h(\xi)(s,t)=\left( 
x(s) \cos ht  - y(s) \sin ht , \right. \vspace{0.1cm } \\ 
\left. x(s) \sin ht + y(s) \cos ht, 
 z(s) \cos t , z(s) \sin t
\right).
\end{array}
\end{equation}
When $h=0$ in \eqref{eq:Xh}, we arrive at  
\begin{equation}\label{eq:paramXrot}
X^0(\xi)(s,t)=\left( 
x(s) , y(s) , 
 z(s) \cos t , z(s) \sin t \right).
\end{equation}

It may be useful sometimes to write the profile curve $\xi $ in geographical coordinates $0< \varphi \leq \pi/2$, 
$-\pi < \lambda \leq \pi$, that is:
$$\xi(s)=(\cos \varphi (s)  \cos \lambda (s) , \cos \varphi (s)  \sin \lambda (s) , \sin \varphi (s),0 ).$$
In this way, writing the 3-sphere $\s^3=\{ (\omega_1,\omega_2)\in \C^2 : |\omega_1|^2+|\omega_2|^2=1 \}$, we get
\begin{equation}\label{eq:paramXgeo}
X^h(\xi)(s,t)=\left( 
\cos \varphi (s) \,e^{i(ht+\lambda (s))} , \sin \varphi (s)\, e^{it} \right).
\end{equation}

\begin{remark}\label{rem:axially}
	The rotational surfaces $X^0(\xi)$ given by \eqref{eq:paramXrot} are congruent to the {\em axially symmetric} (with respect to the geodesic $c_0$) surfaces in $\s^3$ considered in \cite{AL15}, \cite{Br13b}, \cite{Pr10} or \cite{Pr16}. However, they choose the plane curve $\alpha(s)=(x(s),y(s))$ contained in the unit disk as profile curve, and then $z(s)=\sqrt{1-|\alpha(s)|^2}$.
	Moreover, \eqref{eq:paramXgeo} is compatible with the concept of \textit{twizzler} considered in \cite[Definition 3.4]{E11}. 
	We also deduce from \eqref{eq:paramXgeo} that $X^1(\xi)(s,t)=e^{it} \left( 
	\cos \varphi (s) \,e^{i\lambda (s)} , \sin \varphi (s) \right) $ are the so-called {\em Hopf immersions}.
\end{remark}

\begin{example}\label{ex:great circle}
Given $\theta \in (0,\pi) $, let $\zeta_\theta $ be the great semicircle $\sin \theta \, x_2 = \cos \theta \, x_3$ in $\s^2_+$, $\theta $ being the angle between $c_0$ and $\zeta_\theta$. Its arc length parametrization is given by
$$\zeta_\theta (s)=(\cos s, \cos \theta \sin s, \sin \theta \sin s,0), \, 0<s<\pi. $$
In particular, by considering the great semicircle $\zeta_{\pi/2} $ orthogonal to $c_0$ as profile curve, we have that
\begin{equation}\label{LawHelicoids}
X^h(\zeta_{\pi/2})(s,t)=\left( \cos s \, e^{iht} , \sin s \, e^{it} \right). 
\end{equation}
\begin{enumerate}[\rm (i)]
\item If $h=0$, the rotational surface $X^0(\zeta_{\pi/2}) $ gives the totally geodesic $\s^2 \hookrightarrow \s^3$ given by $x_2=0$.
\item If $h> 0$, using \cite[Proposition 7.2]{La70}, $X^h(\zeta_{\pi/2})$ provide the only geodesically ruled minimal surfaces in $\s^3$, which will be referred to as {\em Lawson spherical helicoids}. Looking at \eqref{LawHelicoids}, when $h=m/k$ with $(m,k)\in\Z^+ \times \Z^+$, $(m,k)=1$, we recover the compact minimal surfaces $\tau_{m,k}$ with zero Euler characteristic  (non-orientable if and only if $mk$ is even) studied by Lawson in \cite[Section 7]{La70}. The only one embedded is the Clifford torus, corresponding to $h=1$. 
\end{enumerate}
\end{example}

\begin{example}\label{ex:parallel}
Given $\varphi_0 \in (0,\pi/2)$, let $\mu_{\varphi_0} $ be the small circle in $\s^2_+$ parallel to $c_0$ given by $x_3=\sin \varphi_0$. Using \eqref{eq:paramXgeo}, for any $h\geq 0$, $X^h(\mu_{\varphi_0})$ leads to the CMC {\em standard torus} $\s^1(\cos\varphi_0)\times \s^1\left( \sin \varphi_0 \right) \hookrightarrow\s^3$ given by $|\omega_1|=\cos \varphi_0$ and $|\omega_2|=\sin \varphi _0$. The Clifford torus corresponds to $\varphi_0=\pi/4$.
\end{example}

\begin{example}\label{ex:small circle}
Given $\delta >0$, let $\eta_{\delta}$ be the small circle  in $\s^2_+$ orthogonal to $c_0$ given by $x_2=\tanh \delta$, with constant curvature $\kappa_\delta\equiv \sinh \delta$.
Using \eqref{eq:paramXrot}, the rotational surface $X^0(\eta_{\delta})$ gives the totally umbilical 2-sphere 
$\s^2(R_\delta) \hookrightarrow\s^3$ given by $x_2=\tanh \delta$, $x_1^2+x_3^2+x_4^2=\sech^2 \delta=:R_\delta^2$.
If $\delta =0$, we recover the totally geodesic equatorial 2-sphere $x_2=0$.

\end{example}



\section{The spherical angular momentum of a spherical curve} \label{Sect:Momentum}

We introduce a smooth function associated to any spherical curve, which completely determines it (up to a family of distinguished isometries) in relation with its relative position with respect to the fixed geodesic $c_0$.

Let $\xi=(x,y,z): I\subseteq \R \rightarrow \s^2 \subset \R^3$ be a spherical smooth curve parametrized by the arc length,
i.e.\ $|\xi (s)|=|\dot \xi (s)|=1, \, \forall s\in I$, where $I$ is some interval in $\R$. 
We will denote by a dot $\dot \,$ the derivative with respect to $s$ and by $\langle
\cdot, \cdot \rangle $ and $\times$ the Euclidean inner product and the cross product in $\R^3$ respectively. 

Let $T=\dot \xi $ be the unit tangent vector and $N=\xi \times \dot
\xi$ the unit normal vector of $\xi$. If $\nabla$ is the connection in $\s^2$, the oriented geodesic curvature $\kappa$ of $\xi$
is given by the Frenet equation $\nabla_T T = \kappa N$. Hence, we have that
\begin{equation}\label{Frenetbis}
 \ddot \xi =-\xi + \kappa N, \quad \dot N=-\kappa \,  \dot \xi
\end{equation}
and so $\kappa = \det (\xi, \dot \xi, \ddot \xi)$.

We pay attention to the geometric condition that the curvature of $\xi$ depends on the distance to the fixed geodesic $c_0$ of $\s^2$ given by $z=0$.
So we can assume, at least locally, the condition $\kappa=\kappa(z)$.

At any given point $\xi (s)$ on the curve, we introduce the {\em spherical angular momentum} (with respect to the fixed geodesic $c_0$) $\mathcal K (s)$ as the signed volume of the parallelepiped spanned by the position vector $\xi (s)$, the unit tangent vector $T(s)$ and the unit vector ${\bf e_3}:=(0,0,1)$, orthogonal to $c_0$. Concretely, we define
\begin{equation}\label{spherical momentum}
\mathcal K(s) := - \det (\xi(s),T(s), {\bf e_3})=-\langle N(s), {\bf e_3} \rangle = \dot x(s) y(s) -x(s) \dot y(s) .
\end{equation}
In physical terms, as a consequence of Noether's Theorem (cf.\ \cite{A78}), $\mathcal K (s)$ may be described as the angular momentum of a particle of unit mass with unit speed and spherical trajectory $\xi (s)$. We point out that $\mathcal K$ is a smooth function that takes values in $[-1,1]$. It is well defined, up to sign, depending on the orientation of the normal to $\xi$.

The spherical angular momentum of the great circles $\zeta_\theta$, $\theta \in (0,\pi)$, collected in Example \ref{ex:great circle}, is constant, concretely $\mathcal K_\theta=-\cos \theta$ and so, it distinguishes these geodesics by its relative position with respect to the fixed $c_0$. It is easy to check that the spherical angular momentum of the parallels $\mu_{\varphi_0}$, $\varphi_0\in (0,\pi/2)$, is also constant $-\cos \varphi_0$ (see Example \ref{ex:parallel}). Finally, for the small circles $\eta_\delta$, $\delta >0$, described in Example \ref{ex:small circle}, we obtain that $\mathcal K_\delta (s)= \tanh \delta \sin(\cosh \delta \, s)=\sinh \delta \, z_\delta(s) $, being $\sinh \delta$ precisely the curvature $\kappa_\delta$ of $\xi_\delta$ (see Section \ref{ex:Klinear} for details).

We now prove the main local result of this section, which 
shows how the spherical angular momentum $\mathcal K = \mathcal K (z) $ determines uniquely the spherical curve $\xi =(x,y,z)$, assuming $z$ non-constant.

\begin{theorem}\label{K determines}
Any spherical curve $\xi=(x,y,z):I\subseteq \R \rightarrow \s^2$, with $z$ non-constant, is uniquely determined by its spherical angular momentum $\mathcal K$ as a function of its coordinate $z$, that is, by $\mathcal K= \mathcal K(z)$. The uniqueness is modulo rotations around the $z$-axis. Moreover, the curvature of $\xi $ is given by $\kappa (z)=\mathcal K' (z)$.
\end{theorem}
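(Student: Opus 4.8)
The plan is to transfer everything to cylindrical coordinates adapted to the $z$-axis and reduce the statement to a pair of ordinary differential equations for $z$ and for the angular coordinate. Since $z$ is non-constant with $z^2\le 1$, on the open dense set where $z^2<1$ we may write $\xi=(x,y,z)$ as $x=\sqrt{1-z^2}\,\cos\theta$, $y=\sqrt{1-z^2}\,\sin\theta$ with $\theta=\theta(s)$ smooth, and put $r=\sqrt{1-z^2}$. A direct computation gives $\mathcal K=\dot x\,y-x\,\dot y=-r^2\dot\theta$, hence
\begin{equation*}
\dot\theta=-\frac{\mathcal K(z)}{1-z^2}.
\end{equation*}
Using $\dot x^2+\dot y^2=\dot r^2+r^2\dot\theta^2$ together with $r\dot r=-z\dot z$ and the unit-speed condition $\dot x^2+\dot y^2+\dot z^2=1$, a short simplification yields
\begin{equation*}
\dot z^{2}=1-z^2-\mathcal K(z)^2 .
\end{equation*}
Thus the whole curve is encoded, through $x=\sqrt{1-z^2}\cos\theta$, $y=\sqrt{1-z^2}\sin\theta$, in these two equations, which depend only on the prescribed function $\mathcal K=\mathcal K(z)$.

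First I would dispatch the curvature formula, which is short and also feeds the uniqueness argument. From \eqref{spherical momentum} we have $\mathcal K=-\langle N,{\bf e_3}\rangle$ with $N=\xi\times\dot\xi$; differentiating and using the Frenet equation $\dot N=-\kappa\dot\xi$ of \eqref{Frenetbis} gives $\dot{\mathcal K}=-\langle\dot N,{\bf e_3}\rangle=\kappa\,\langle\dot\xi,{\bf e_3}\rangle=\kappa\,\dot z$. On the other hand, since $\mathcal K=\mathcal K(z(s))$, the chain rule gives $\dot{\mathcal K}=\mathcal K'(z)\,\dot z$. Comparing, $(\kappa-\mathcal K'(z))\dot z=0$, so $\kappa=\mathcal K'(z)$ at every point where $\dot z\neq0$, and hence everywhere by continuity (on any subinterval where $z$ might be locally constant the curve is a circular arc at fixed height, and one checks the identity directly there using the third component of \eqref{Frenetbis}).

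For uniqueness, the delicate point is that the first-order relation $\dot z^{2}=1-z^2-\mathcal K(z)^2$ is not amenable to the classical uniqueness theorem at the turning points, where the right-hand side vanishes and its square root fails to be Lipschitz. I would circumvent this by differentiating it: on $\{\dot z\neq0\}$ one gets $\ddot z=-z-\mathcal K(z)\mathcal K'(z)$, and by continuity this second-order equation, whose right-hand side is now smooth, holds on the whole interval. Hence $z(\cdot)$ is determined by $z(s_0)$ and $\dot z(s_0)$, the latter fixed up to sign by $\dot z(s_0)^2=1-z(s_0)^2-\mathcal K(z(s_0))^2$, the two choices amounting to traversing the trace of $\xi$ in opposite senses. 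Once $z(s)$ is known, $\theta$ follows by the quadrature $\theta(s)=\theta(s_0)-\int_{s_0}^{s}\mathcal K(z(\sigma))/(1-z(\sigma)^2)\,d\sigma$, and then $x,y$ are recovered from $z,\theta$; the only remaining free constant $\theta(s_0)$ is precisely a rotation about the $z$-axis, so two curves with the same $\mathcal K=\mathcal K(z)$ differ by such a rotation. The main obstacle is thus the analysis at the turning points $\dot z=0$: one must be sure the two branches $\dot z=\pm\sqrt{1-z^2-\mathcal K(z)^2}$ glue into a single solution of the regular equation $\ddot z=-z-\mathcal K(z)\mathcal K'(z)$, and that crossing a (necessarily simple) zero of $1-z^2-\mathcal K(z)^2$ produces no spurious solutions; everything else is bookkeeping with the polar expressions and \eqref{Frenetbis}.
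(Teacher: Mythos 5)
Your proposal is correct and follows essentially the same route as the paper: your cylindrical coordinates $(r,\theta)$ with $r=\sqrt{1-z^2}$ are exactly the paper's geographical coordinates $(\varphi,\lambda)$ with $z=\sin\varphi$, your identities $\mathcal K=-r^2\dot\theta$ and $\dot z^2=1-z^2-\mathcal K(z)^2$ are the paper's \eqref{K momentum} and \eqref{dif}, and the curvature formula is obtained by the same computation $\dot{\mathcal K}=\kappa\dot z$. The one point where you go beyond the paper is worth noting: the paper establishes uniqueness by the quadrature $s=\int dz/\sqrt{1-z^2-\mathcal K(z)^2}$, which tacitly works away from the turning points $\dot z=0$, whereas you pass to the regularized second-order equation $\ddot z=-z-\mathcal K(z)\mathcal K'(z)$ and invoke standard ODE uniqueness, which cleanly handles the gluing of the two branches $\dot z=\pm\sqrt{1-z^2-\mathcal K(z)^2}$ across a turning point (at the mild cost of needing $\mathcal K\mathcal K'$ locally Lipschitz, which holds in all the applications in the paper). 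So the ``main obstacle'' you flag at the end is in fact already dispatched by your own second-order reformulation; nothing further is needed there.
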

\begin{proof}
Let $\xi=(x,y,z):I\subseteq \R \rightarrow \s^2$ be a unit speed spherical curve with $z$ non-constant, and assume that $\kappa=\kappa(z)$.
Using (\ref{Frenetbis}) and \eqref{spherical momentum}, we have that 
$\dot {\mathcal  K} = - \langle \dot N, {\bf e_3} \rangle = \kappa \langle \dot \xi, {\bf e_3} \rangle = \kappa \dot z $ and taking into account the assumption $\kappa=\kappa(z)$, we finally arrive at
\begin{equation}\label{anti K}
d\mathcal K= \kappa (z) dz,
\end{equation}
that is,  $\mathcal K (z)$ can be interpreted as an anti-derivative of $\kappa (z)$ or, equivalently, $\kappa (z)=\mathcal K ' (z)$.

We now use geographical coordinates in $\s^2$ and write 
$$\xi=(\cos \varphi  \cos \lambda , \cos \varphi  \sin \lambda , \sin \varphi ), \ -\pi/2 \leq \varphi \leq \pi/2, \, -\pi < \lambda \leq \pi .$$ 
Notice that the latitude $\varphi=\arcsin z$ is just the signed distance to $c_0$.
Using \eqref{spherical momentum}, $\mathcal K$ may be written as
\begin{equation}\label{K momentum}
\mathcal K (s)= - \dot \lambda (s)  \cos^2 \varphi (s).
\end{equation}
The unit-speed condition on $\xi$ implies that $\dot \varphi^2  + \dot \lambda^2  \cos^2 \varphi = 1$ and, since $\varphi$ is non constant (because we assume that $z$ is non constant) and using \eqref{K momentum}, we deduce that
\begin{equation}\label{dif}
ds=\frac{d\varphi}{\sqrt{1-\dot \lambda ^2  \cos^2 \varphi}}=\frac{\cos \varphi \, d\varphi}{\sqrt{\cos^2 \varphi-\mathcal K^2}}=\frac{dz}{\sqrt{1-z^2-\mathcal K^2}}
\end{equation}
and 
\begin{equation}\label{long}
d\lambda= - \frac{\mathcal K ds}{\cos^2 \varphi}=  \frac{\mathcal K \, ds}{z^2-1} .
\end{equation}
Hence, given $\mathcal K=\mathcal K (z)$ as an explicit function, looking at \eqref{dif} and \eqref{long}, one may compute $z(s)$ (and so $\varphi (s)$) and $\lambda (s) $ in three steps: integrate \eqref{dif} to get $s=s(z)$, invert to get $z=z(s)$, and integrate \eqref{long} to get $\lambda =\lambda (s)$. We observe that the integration constants appearing in \eqref{dif} and \eqref{long} simply mean a translation of the arc parameter and a rotation around the $z$-axis respectively. 
\end{proof}

\begin{remark}\label{c}
{\rm Following the proof of Theorem \ref{K determines},
we point out that we can determine by quadratures in a constructive explicit way the spherical curves such that $\kappa=\kappa(z)$, similarly as in Theorem 3.1 in \cite{S99}.
Concretely, if we prescribe a continuous function $\kappa=\kappa(z)$ as curvature, the proof of Theorem~\ref{K determines} leads to the computation of three quadratures, following the sequence:
\begin{enumerate}
\item[\rm (i)] A one-parameter family of anti-derivatives of $\kappa (z)$:
$$
\int \! \kappa (z) dz = \mathcal K(z).
$$
\item[\rm (ii)] Arc-length parameter $s$ of $\xi =(x,y,z) $ in terms of $z$, defined ---up to translations of the parameter--- by the integral:
$$
s=s(z)=\int\!\frac{dz}{\sqrt{1-z^2-\mathcal K(z)^2}},
$$
where $\mathcal K(z)^2 + z^2 < 1 $, and
inverting $s=s(z)$ to get $z=z(s)$ and the latitude 
$
\varphi(s)=\arcsin z(s)
$.
\item[\rm (iii)] Longitude of $\xi=(\cos \varphi  \cos \lambda , \cos \varphi  \sin \lambda , \sin \varphi )$ in terms of $s$, defined ---up to a rotation around the $z$-axis--- by the integral:
$$
\lambda (s)=\int \! \frac{\mathcal K(z(s))}{z(s)^2-1}ds ,
$$
where $|z(s)|<1$.
\end{enumerate}
We remark that we get a one-parameter family of spherical curves satisfying $\kappa=\kappa(z)$ according to the spherical angular momentum $\mathcal K (z)$ chosen in {\rm (i)} and verifying $\mathcal K(z)^2 +z^2 < 1 $. It will distinguish geometrically the curves according to their relative position with respect to the equator $c_0$ (or the $z$-axis).
}
\end{remark}

We show a simple example applying steps {\rm (i)-(iii)} of Remark~\ref{c}:

\begin{example}[$\kappa\!\equiv\! 0$]\label{ex:Kcte}
{\rm Then $\mathcal K \!\equiv \! c \!\in\! \R$, $s\!=\arcsin \frac{z}{\sqrt{1-c^2}} $, with $|c|<1$.
So $z(s)=\sqrt{1-c^2} \sin s $, $\lambda(s)\!=\!-\!\arctan (c \tan s)$ and, finally, $\xi (s)\!=\!(\cos s,-c \sin s, \sqrt{1-c^2} \sin s) $, which corresponds to the great circle $\sqrt{1-c^2}\,y+c\,z=0 $.
Up to rotations around the $z$-axis, they provide arbitrary great circles in $\s^2$, except the equator. Putting $c=-\cos \theta$, we recover the $\zeta_\theta$ given in Example \ref{ex:great circle}. As a consequence of Theorem~\ref{K determines}, the great circle $\zeta_\theta \equiv \sin \theta \, y = \cos \theta \, z$ is the only spherical curve (up to rotations around the $z$-axis) with constant spherical angular momentum $\mathcal K\!\equiv\! -\cos \theta$. 
}
\end{example}

\subsection{Spherical small circles}\label{ex:Klinear}
We assume that $\kappa \equiv k_0 >0 $ and we apply Remark~\ref{c}.
Then $\mathcal K (z)= k_0 z+c$ and, in this case, it is not difficult to get that 
$$z(s)=\frac{1}{1+k_0^2}\left( \sqrt{1-c^2+k_0^2}\, \sin \left(\sqrt{1+k_0^2}\, s\right)-c \, k_0\right)$$
with $|c|<\sqrt{1+k_0^2}$. 
But the computation of $\lambda$ is far from being trivial and depends on the values of $c$. After a long computation, we deduce:
\begin{itemize}
	\item If $|c|\neq k_0$:
		$\lambda(s)=\arctan \left(\frac{\sqrt{1 - c^2 + k_0^2} + (1 - c k_0 + k_0^2) \tan({\frac12 \sqrt{1 + k_0^2} s})}{(k_0-c)\sqrt{1 + k_0^2}}\right)+$\\
		
		\hspace{2.5cm}$+\arctan\left(\frac{\sqrt{1 - c^2 + k_0^2} + (1 + c k_0 + k_0^2) \tan({\frac12 \sqrt{1 + k_0^2} s})}{(k_0+c)\sqrt{1 + k_0^2}}\right).$
	\item If $c=k_0$:
	$\lambda(s)= \arctan\left( \frac{1 - (1 + 2 k_0^2) \tan(\frac12 \sqrt{1 + k_0^2} s)}{2 k_0 \sqrt{1 + k_0^2}}\right).$
	\item If $c=-k_0$:
	$\lambda(s)= \arctan\left( \frac{1 + (1 + 2 k_0^2) \tan(\frac12 \sqrt{1 + k_0^2} s)}{2 k_0 \sqrt{1 + k_0^2}}\right).$
\end{itemize}
Of course, up to rotations around the $z$-axis, we get all the non-parallel small circles of $\s^2$.
The parameter $c$ distinguishes the position of the circle with respect to the equator.
If $0\leq |c|<1$ the circles intersect the equator transversely; in particular, when $c=0$ we obtain the orthogonal circles to the equator. 
If $c=\pm 1$, the circles are tangent to the equator.
Finally, if $1<|c|<\sqrt{1+k_0^2}$, the circles do not intersect the equator (see Figure~\ref{Circles}).

\begin{figure}[h!]
\begin{center}
\includegraphics[height=3.4cm]{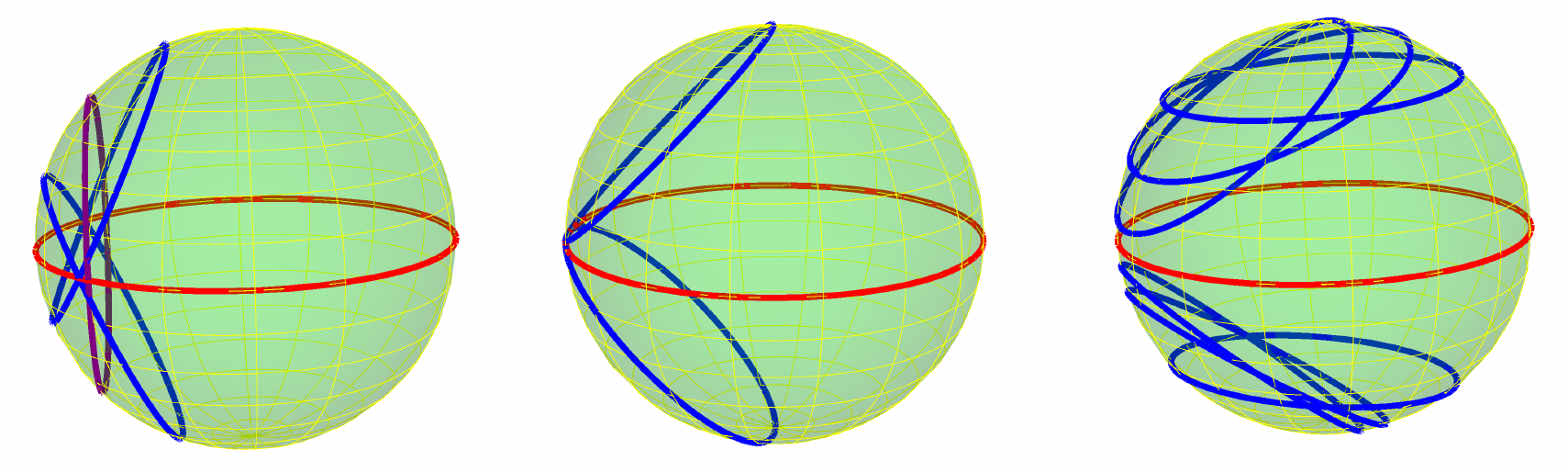}
\caption{ 
Small circles: $\mathcal K (z)= k_0 \, z + c, \ k_0 >0$; \newline $0\leq |c|<1$ (left), $c=\pm 1$ (center), $1<|c|<\sqrt{1+k_0^2}$ (right).}
\label{Circles}
\end{center}
\end{figure}

For instance, if $c=0$ and writing  $k_0=\sinh \delta$, we arrive (up to rotations around the $z$-axis)  at the small circle $\eta_\delta \equiv y=\tanh \delta$ of Example \ref{ex:small circle}.
Theorem~\ref{K determines} ensures that $\eta_\delta$ is the only spherical curve, up to rotations around the $z$-axis, with spherical angular momentum $\mathcal K (z)= \sinh \delta \,z$, $\delta >0$.

In conclusion, as a consequence of Theorem~\ref{K determines}, we have proved the following uniqueness result.
\begin{corollary}\label{cor:circles}
The spherical non-parallel circles of constant curvature $k_0\geq 0$ are the only spherical curves (up to rotations around the $z$-axis) with spherical angular momentum $\mathcal K(z)=k_0 z+ c$, $|c|<\sqrt{1+k_0^2}$.
\end{corollary}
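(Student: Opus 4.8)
The plan is to read the statement off Theorem~\ref{K determines} and the differential relation \eqref{anti K}, the only real content being the bookkeeping of the integration constant. First I would record the classical fact that a unit-speed spherical curve has constant geodesic curvature $\kappa\equiv k_0\ge 0$ precisely when it lies on a circle of $\s^2$ (a great circle when $k_0=0$): from the Frenet equations \eqref{Frenetbis} one has $\frac{d}{ds}(N+k_0\xi)=0$, so $N+k_0\xi\equiv\mathbf a$ is a constant vector with $|\mathbf a|^2=1+k_0^2$ and $\langle\xi,\mathbf a\rangle\equiv k_0$, whence $\xi$ runs in the affine plane $\langle x,\mathbf a\rangle=k_0$ and cuts $\s^2$ in a genuine circle. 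Among these circles, the parallels to $c_0$ are exactly the ones with $z$ constant (the one of curvature $k_0$ sits at latitude $\varphi_0$ with $\tan\varphi_0=k_0$, i.e.\ it is $c_0$ itself when $k_0=0$); hence ``non-parallel circle of constant curvature $k_0$'' means precisely ``spherical curve with $z$ non-constant and $\kappa\equiv k_0$'', which is exactly the setting in which Theorem~\ref{K determines} operates.

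Granting this, one inclusion is immediate: if $\xi=(x,y,z)$ has $\mathcal K=\mathcal K(z)=k_0z+c$, then by the last assertion of Theorem~\ref{K determines} its curvature is $\kappa(z)=\mathcal K'(z)=k_0$, a constant, and since $z$ is non-constant $\xi$ is a non-parallel circle of constant curvature $k_0$; it is unique modulo rotations about the $z$-axis by the same theorem.

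For the converse I would start from a non-parallel circle $\xi$ of constant curvature $k_0\ge 0$, so $z$ is non-constant, and integrate \eqref{anti K} with $\kappa\equiv k_0$ to get $\mathcal K(z)=k_0z+c$ for some $c\in\R$. To identify which $c$ can occur, recall that $\xi$ is defined exactly on the set where $1-z^2-\mathcal K(z)^2>0$ (cf.\ \eqref{dif} and step (ii) of Remark~\ref{c}); with $\mathcal K(z)=k_0z+c$ this is the quadratic inequality $-(1+k_0^2)z^2-2ck_0z+(1-c^2)>0$ in $z$, which admits solutions if and only if its discriminant $4c^2k_0^2+4(1+k_0^2)(1-c^2)=4\big(1+k_0^2-c^2\big)$ is positive, that is, if and only if $|c|<\sqrt{1+k_0^2}$. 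Conversely, for each such $c$ the quadratures of Remark~\ref{c} produce a spherical curve with spherical angular momentum $k_0z+c$, which by the first paragraph is a non-parallel circle of curvature $k_0$; this unifies Example~\ref{ex:Kcte} (the case $k_0=0$) with the computation of Section~\ref{ex:Klinear} (the case $k_0>0$).

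I do not foresee a genuine obstacle: the statement is a corollary in the literal sense. The two points that deserve a line of care are the identification of constant-curvature spherical curves with circles --- dispatched via \eqref{Frenetbis} as above --- and the discriminant computation that fixes the range $|c|<\sqrt{1+k_0^2}$; the ``up to rotations about the $z$-axis'' clause is inherited verbatim from Theorem~\ref{K determines}.
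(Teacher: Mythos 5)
Your argument is correct, and it reaches the corollary by a route that differs in a meaningful way from the paper's. The paper proves this statement by actually carrying out the three quadratures of Remark~\ref{c} for $\mathcal K(z)=k_0z+c$: it integrates \eqref{dif} to get the explicit sinusoidal formula for $z(s)$ (whose radicand $1-c^2+k_0^2$ already forces $|c|<\sqrt{1+k_0^2}$), then performs the considerably messier integration of \eqref{long} for $\lambda(s)$, split into the cases $|c|\neq k_0$ and $c=\pm k_0$, and recognizes the resulting parametrizations as the non-parallel small circles; uniqueness is then read off from Theorem~\ref{K determines}. You instead bypass the explicit parametrization entirely: the first integral $N+k_0\xi\equiv\mathbf a$ from \eqref{Frenetbis} identifies the constant-curvature curves with planar sections of $\s^2$, the identity $\kappa=\mathcal K'(z)$ from Theorem~\ref{K determines} translates ``$\kappa\equiv k_0$ and $z$ non-constant'' into ``$\mathcal K(z)=k_0z+c$'', and the admissible range of $c$ drops out of the discriminant of $1-z^2-(k_0z+c)^2$ rather than from a closed-form $z(s)$. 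Both proofs rest on Theorem~\ref{K determines} for the uniqueness clause, but yours is more conceptual and uniform in $k_0\ge 0$ (absorbing Example~\ref{ex:Kcte} as the case $k_0=0$), at the cost of not producing the explicit formulas for $z(s)$ and $\lambda(s)$ that the paper later reuses (e.g.\ to discuss how the sign of $|c|-1$ governs the position of the circle relative to the equator). The only point worth a half-line more of care is that ``$z$ non-constant'' in Theorem~\ref{K determines} matches ``non-parallel'' for circles, which you do address; your observation that $\dot z\neq 0$ somewhere forces the quadratic to be positive somewhere, hence the discriminant condition, is exactly the right way to pin down $|c|<\sqrt{1+k_0^2}$.
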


\subsection{Spherical catenaries}\label{ex:Kinvz}

The spherical catenaries are the equilibrium lines of an inelastic flexible homogeneous infinitely thin massive wire included in a sphere, which is placed in a uniform gravitational field. Like any catenaries, their centres of gravity have the minimal altitude among all the curves with given length passing by two given points. They were studied by Bobillier in 1829 and by Gudermann in 1846. 

Using cylindrical coordinates $(r,\theta,z)$ in $\R^3$, they can be described analytically (cf.\ \cite{F93}) by the following first integral of the corresponding ordinary differential equation:
\begin{equation}\label{catenary}
(z-z_0) \, r^2 \, \frac{d\theta}{ds}= {\rm constant}.
\end{equation}

On the other hand, we study in this section spherical curves with curvature
\begin{equation}\label{k=a/z2}
\kappa(z)=c /z^2, \  c>0.
\end{equation}

We follow Remark \ref{c} considering (\ref{k=a/z2}) and choosing
\begin{equation}\label{eq:Kcat}
\mathcal K (z)=-\frac{c}{z}, \ c>0.
\end{equation}
We point out that it is the same choice of momentum as for the catenaries of $\R^2$ (see \cite{CCI16}) and of $\mathbb L^2$
(see \cite{CCI18}).
Then we have:
\begin{equation}\label{arc_catenary}
s=s(z)=\int \frac{z\,dz}{\sqrt{z^2(1-z^2)-c^2}},
\end{equation}
which implies that $c<1/2$ and $1-\sqrt{1-4c^2}<2z^2<1+\sqrt{1-4c^2}$, and hence:  
\begin{equation}\label{z_catenary}
z(s)=\sqrt{\frac{1+\sqrt{1-4c^2}\sin 2s}{2}}.
\end{equation}
We observe from \eqref{z_catenary} that the limit case $c=1/2$ leads to the parallel $z=1/\sqrt 2$.
In addition, we have from \eqref{eq:Kcat} that
\begin{equation}
\label{eq catenaries}
d\lambda=\frac{c}{z(1-z^2)}\,ds.
\end{equation}
Looking at (\ref{catenary}), taking into account that $r^2+z^2=1$ and $\theta = \lambda$, we deduce from (\ref{eq catenaries}) that we get a spherical catenary (with $z_0=0$ and constant $c\in (0,1/2)$). Combining (\ref{k=a/z2}) and (\ref{z_catenary}), we have that the intrinsic equation of the spherical catenaries is given by
$$ \kappa (s)=\frac{2c}{1+\sqrt{1-4c^2}\sin 2s}, \ 0<c<1/2. $$

Now we study when the spherical catenaries are closed curves. 
We put $2c =  \sin \beta$, with $0< \beta < \pi/2$, and call $\mathcal C_\beta$ the corresponding catenary. 
We have that \eqref{z_catenary} is rewritten as
\begin{equation}\label{z_catenary bis}
z_\beta(s)=\sqrt{\frac{1+\cos \beta \sin 2s}{2}}.
\end{equation}
Hence
$\sin (\beta / 2) < z(s) < \cos (\beta / 2)$. From \eqref{eq catenaries} and \eqref{z_catenary bis}, we deduce 
\begin{equation}\label{long_catenary}
\lambda_{\beta}(s)=\int_{0}^{s}\frac{\sqrt{2}\sin\beta \,dt}{(1-\cos\beta \sin(2t))\sqrt{1+\cos \beta\sin(2t)}}.
\end{equation}
Notice that $\lambda_\beta$ is an increasing function. Furthermore, since the function $z_{\beta}$ is $\pi$-periodic, the catenary $\mathcal C_{\beta}$ will be a closed curve if and only if  $\lambda_{\beta}(s+m\pi)=\lambda_{\beta}(s)+2k\pi$, with $m,k\in\Z$. As $\lambda_{\beta}'$ is also a $\pi$-periodic function,  $\lambda_{\beta}(s+m\pi)=\lambda_{\beta}(s)+m\lambda_{\beta}(\pi)$. Thus $\mathcal C_{\beta}$ is a closed curve if and only if
\begin{equation}\label{eq:Tcat}
T(\beta):=\frac{\lambda_{\beta}(\pi)}{2\pi}\in \Q.
\end{equation}
So the problem of being $\mathcal C_\beta$ a closed curve can be solved analyzing the function $T=T(\beta)$, $\beta \in (0,\pi/2)$, given in \eqref{eq:Tcat}. Using the same arguments as in \cite{Ot70}, \cite{AL15} or \cite{Pr16},
it can be proved that $T$ is a monotonically increasing function and $T((0,\frac{\pi}{2}))=(\frac{1}{2},\frac{\sqrt{2}}{2})$. Hence, for any $q\in \Q\cap(\frac{1}{2},\frac{\sqrt{2}}{2}) $, there exists an unique $\beta_q\in (0,\frac{\pi}{2})$, such as $\mathcal C_{\beta_q}$ is a closed catenary. 
Moreover, all these closed catenaries possess dihedral symmetry, i.e.\ the curve can be decomposed as the union of a fundamental piece and a finite number of rotations of the fundamental piece in the sphere.
The embeddedness of $\mathcal C_\beta$ would occur only if $T(\beta)=1/m$, for some positive $m\in \Z$. Thus the spherical catenaries are not simple curves. 
As a summary, the spherical catenaries consists of a sequence of undulations joining alternatively two parallels which are images of each other by rotations around the $z$-axis. They are either closed or dense in the zone between two parallels (see Figure \ref{Catenaries}). Using the nomenclature of \cite{AGM03}, we remark that the catenaries are generalized $\frac12$-elastic curves (see Proposition 9 in \cite{AGM03}), that is, critical points of the functional $\int_\xi \kappa^{1/2}\, ds$.
 
\begin{figure}[h]
\begin{center}
\includegraphics[height=6.5cm]{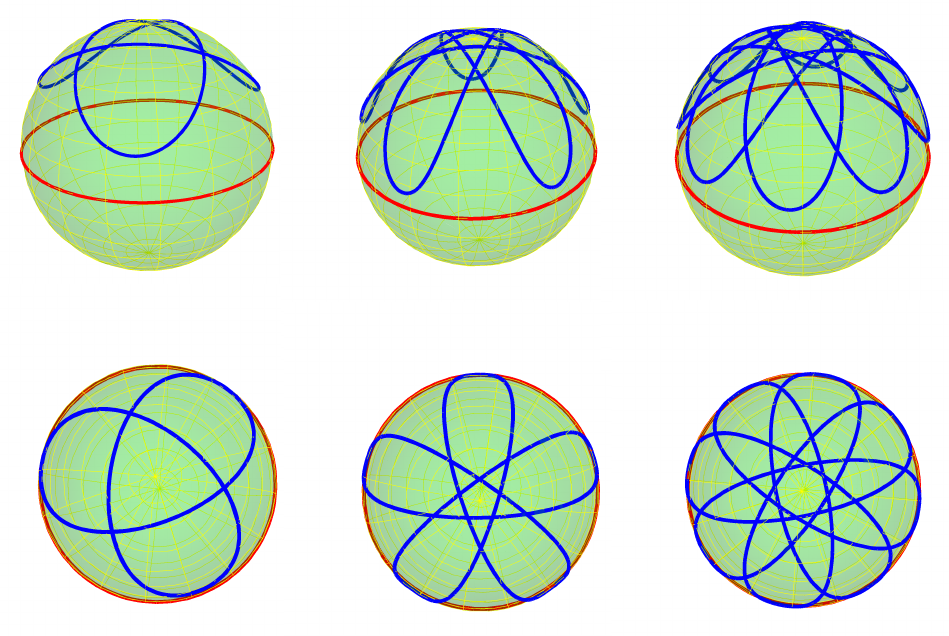}
\caption{ 
Two sights of the spherical catenaries $\mathcal C_{\beta_{2/3}}$ (left), $\mathcal C_{\beta_{3/5}}$ (center)
and $\mathcal C_{\beta_{5/8}}$ (right).
}
\label{Catenaries}
\end{center}
\end{figure}

In conclusion, as a consequence of Theorem \ref{K determines}, we have proved the following uniqueness result.
\begin{corollary}\label{cor:catenary}
The spherical catenaries $\mathcal C_\beta$, $0<\beta <\pi/2$,  are the only spherical curves (up to rotations around the $z$-axis) with spherical angular momentum $\mathcal K(z)=-\sin\beta/(2z)$. In addition, the closed catenaries  are non-embedded, possess dihedral symmetry and can be parametrized as $\mathcal C_{\beta_q}$, with $q\in  \Q\cap(\frac{1}{2},\frac{\sqrt{2}}{2}) $. 
\end{corollary}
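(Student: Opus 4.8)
The plan is to obtain both assertions by combining Theorem~\ref{K determines} with the quadratures already carried out in Section~\ref{ex:Kinvz}. For the uniqueness part, I would fix $\beta\in(0,\pi/2)$, set $c=\sin\beta/2\in(0,1/2)$, so that the prescribed momentum is $\mathcal K(z)=-\sin\beta/(2z)=-c/z$, and first note that by the last clause of Theorem~\ref{K determines} the associated curvature is $\kappa(z)=\mathcal K'(z)=c/z^2$, which is exactly \eqref{k=a/z2}. Since \eqref{z_catenary bis} exhibits $z=z_\beta$ as a non-constant function, Theorem~\ref{K determines} applies and yields, up to a rotation about the $z$-axis, a unique spherical curve realizing this momentum. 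It then remains to identify that curve with the classical spherical catenary: using \eqref{long} (equivalently \eqref{eq catenaries}) together with $r^2+z^2=1$ and $\theta=\lambda$, one checks that it satisfies the first integral \eqref{catenary} with $z_0=0$ and the same constant $c$, which is precisely the Bobillier--Gudermann characterization, so the curve is $\mathcal C_\beta$. Conversely, any spherical curve with $\mathcal K(z)=-\sin\beta/(2z)$ coincides with $\mathcal C_\beta$ by the uniqueness in Theorem~\ref{K determines}.

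For the statement on closed catenaries, I would invoke the analysis following \eqref{z_catenary bis}: $z_\beta$ is $\pi$-periodic and $\lambda_\beta$ is strictly increasing with $\lambda_\beta(s+m\pi)=\lambda_\beta(s)+m\,\lambda_\beta(\pi)$, so $\mathcal C_\beta$ closes up exactly when $T(\beta)=\lambda_\beta(\pi)/(2\pi)\in\Q$, where $T$ is the period function of \eqref{eq:Tcat}. Granting that $T\colon(0,\pi/2)\to\R$ is strictly increasing with image $(\tfrac12,\tfrac{\sqrt2}{2})$, for each $q\in\Q\cap(\tfrac12,\tfrac{\sqrt2}{2})$ there is a unique $\beta_q$ with $T(\beta_q)=q$, and the curves $\mathcal C_{\beta_q}$ exhaust the closed spherical catenaries. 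The dihedral symmetry I would read off from the reflection symmetry $z_\beta(\tfrac{\pi}{2}-s)=z_\beta(s)$ (and the corresponding symmetry of $\lambda_\beta$) about the meridians through the two extremal parallels $z=\cos(\beta/2)$ and $z=\sin(\beta/2)$: the arc joining two consecutive extremal parallels is a fundamental piece whose finitely many rotates about the $z$-axis recover the whole closed curve.

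Finally, for non-embeddedness: if a closed $\mathcal C_\beta$ were simple, then over one full period of $z_\beta$ the longitude would have to increase by exactly $\pm 2\pi$, i.e.\ $T(\beta)=1/m$ for some $m\in\Z^+$. But $1/m\le\tfrac12$ whenever $m\ge2$, while $1/1=1>\tfrac{\sqrt2}{2}$, so $1/m\notin(\tfrac12,\tfrac{\sqrt2}{2})$ for every $m$; since $T(\beta)$ lies in that interval, no closed catenary can be embedded.

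The genuinely substantive ingredient is not in the above assembly, which is essentially formal once Section~\ref{ex:Kinvz} is granted, but in the quantitative behaviour of the period function $T$, namely its strict monotonicity and the identity $T((0,\pi/2))=(\tfrac12,\tfrac{\sqrt2}{2})$ --- this is what drives both the rational parametrization and the non-embeddedness. I would establish it by the same monotonicity-of-period technique used for Otsuki-type tori in \cite{Ot70}, \cite{AL15} and \cite{Pr16}: differentiating $\lambda_\beta(\pi)$ under the integral sign in \eqref{long_catenary} after a symmetrizing substitution, and computing the boundary limits $\lambda_\beta(\pi)\to\pi$ as $\beta\to0^+$ and $\lambda_\beta(\pi)\to\sqrt2\,\pi$ as $\beta\to\tfrac{\pi}{2}^-$, rather than reproducing the argument in full here.
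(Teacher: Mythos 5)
Your proposal is correct and follows essentially the same route as the paper: uniqueness via Theorem~\ref{K determines} applied to $\mathcal K(z)=-c/z$ with $c=\tfrac12\sin\beta$, identification with the Bobillier--Gudermann first integral \eqref{catenary}, closedness via the rationality of $T(\beta)$ in \eqref{eq:Tcat} with monotonicity and range $(\tfrac12,\tfrac{\sqrt2}{2})$ deferred to the same references \cite{Ot70}, \cite{AL15}, \cite{Pr16}, and non-embeddedness because no $1/m$ lies in that interval. The only quibble is your phrasing of the embeddedness criterion: the longitude must increase by exactly $2\pi$ over the whole closed curve (i.e.\ over $m$ periods of $z_\beta$), not over a single $\pi$-period, but this yields the same condition $T(\beta)=1/m$ that the paper uses, so the argument stands.
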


\begin{definition}\label{def:catenoid}
We denote by $\mathrm{Cat}_\beta$ the rotational surface $X^0(\mathcal C_\beta)$ in $\s^3$ generated by the spherical catenary $\mathcal C_\beta$, $\beta \in (0,\pi/2)$. They will be referred to as {\em spherical catenoids}.
\end{definition}
They coincide with the minimal rotational surfaces studied by do Carmo and Dajczer in \cite{dCD83} and Ripoll in \cite{Ri89}. They will play a key role in Section \ref{sect:RotHelMin}.


\section{Prescribing curvature for a helicoidal surface in $\mathbb{S}^3$}\label{Sect:Geometry}

Following the ideas of Section \ref{Sect:DefHel}, three geometric elements determine a helicoidal surface in $\s^3$:
the translation $\psi_a(t)$ along the fixed geodesic $c_0$, the rotation $\phi_b(t)$ around $c_0$ and the profile spherical curve $\xi$. But, taking into account Remark \ref{ab}, only the pitch $h=a/b\geq 0$ and the profile curve $\xi $ are essential. As a consequence of Theorem \ref{K determines} and the fact that rotations around the $z$-axis are nothing but translations along $c_0$, we conclude immediately the following interesting result.
\begin{corollary}\label{cor:Kkey}
Given $h\geq 0$, any helicoidal surface $X^h(\xi) $ in $\s^3$ is uniquely determined, up to translations along $c_0$, by the spherical angular momentum $\mathcal K=\mathcal K(z)$ of its profile curve $\xi =(x,y,z)$, being $z>0$ non constant.
\end{corollary}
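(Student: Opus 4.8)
The plan is to deduce the corollary directly from Theorem \ref{K determines}, the only work being to translate the two ambiguities left in that theorem — a shift of the arc-length parameter of the profile curve and a rotation of $\s^2$ about the $z$-axis — into the corresponding ambiguities for the helicoidal surface. First I would normalize $b=1$ using Remark \ref{ab}, so that a helicoidal surface of prescribed pitch $h\ge 0$ is $X^h(\xi)=X^h_1(\xi)$ and is completely encoded by its profile curve $\xi=(x,y,z)\colon I\to\s^2_+$. Since $z>0$ is non-constant, Theorem \ref{K determines} applies to $\xi$ viewed as a unit-speed curve in $\s^2$: it is determined by $\mathcal K=\mathcal K(z)$ up to a translation $s\mapsto s+s_0$ of its parameter and a rotation $R_{\lambda_0}$ of $\R^3$ about the $z$-axis.

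Next I would check that neither ambiguity affects the helicoidal surface beyond a translation along $c_0$. A shift of $s$ merely reparametrizes $X^h(\xi)$ in the $s$-direction and does not move its image. For the rotation $R_{\lambda_0}$, I would observe that, under the identification $\R^3=\{x_4=0\}\subset\R^4$, it is the rotation $\rho_{\lambda_0}$ of $\R^4$ in the $x_1x_2$-plane fixing the $x_3x_4$-plane, and $\rho_{\lambda_0}$ is exactly a translation along $c_0$ (one has $\rho_{\lambda_0}=\psi_a(\lambda_0/a)$ for any $a\ne0$). As $\rho_{\lambda_0}$ acts only on the coordinates $x_1,x_2$, it commutes with $\psi_h(t)$ (which also acts there) and with $\phi_1(t)$ (which acts on $x_3,x_4$); hence, from \eqref{eq:Xab},
\begin{equation*}
X^h(R_{\lambda_0}\xi)(s,t)=\psi_h(t)\,\phi_1(t)\,\rho_{\lambda_0}\,\xi(s)=\rho_{\lambda_0}\,\psi_h(t)\,\phi_1(t)\,\xi(s)=\rho_{\lambda_0}\circ X^h(\xi)(s,t),
\end{equation*}
so rotating the profile curve about the $z$-axis just translates the surface along $c_0$.

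Combining the two points, if two helicoidal surfaces share the pitch $h$ and their profile curves have the same $\mathcal K=\mathcal K(z)$, then by Theorem \ref{K determines} the profile curves agree up to reparametrization and a $z$-axis rotation, and the displayed identity upgrades this to congruence of the surfaces by a translation along $c_0$; conversely, any translation along $c_0$ arises this way. The only point that genuinely needs to be verified — everything else being a quotation — is this identification of the two circle actions: that rotations of the profile curve about the $z$-axis of $\s^2$ and translations of $\s^3$ along $c_0$ are one and the same $\mathrm{SO}(2)$ acting on the $x_1x_2$-coordinates, together with its commutation with the helicoidal one-parameter group. With that observed, the corollary is immediate, as the text claims.
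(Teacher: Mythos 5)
Your proposal is correct and follows exactly the paper's (very brief) argument: Theorem \ref{K determines} pins down the profile curve up to parameter shifts and rotations about the $z$-axis, and the paper's stated justification is precisely that such rotations are nothing but translations along $c_0$, which is the identification you verify explicitly via the commutation of $\rho_{\lambda_0}$ with $\psi_h(t)$ and $\phi_1(t)$. You simply spell out the details the paper leaves implicit.
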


\begin{remark}\label{hxi}
Corollary \ref{cor:Kkey} means that, apart from the standard tori (see Example \ref{ex:parallel}) corresponding to $z$ constant, any helicoidal surface in the 3-sphere is characterized by its pitch $h=a/b\geq 0$ and the spherical angular momentum $\mathcal K=\mathcal K(z)$ of its profile curve. 

\noindent For instance, Lawson spherical helicoids $X^h(\zeta_{\pi/2})$ described in Example \ref{ex:great circle}, are uniquely determined by $h>0$ and null spherical angular momentum $\mathcal K\equiv 0$ (see Corollary \ref{cor:circles} and Example \ref{ex:Kcte}). 

\noindent On the other hand, the totally umbilical spheres $X^0(\eta_{\delta})$, $ \delta \geq 0$, of radius $R_\delta = \sech \delta$ (see Example \ref{ex:small circle}) are uniquely determined by $h=0$ and linear spherical angular momentum $\mathcal K(z)=k_0 z$, $k_0\geq 0$, being $k_0=\sinh \delta$ (see Corollary \ref{cor:circles} and Section \ref{ex:Klinear}).
\end{remark}

Consistent with Corollary \ref{cor:Kkey}, we are going to reveal that the geometry of the helicoidal surface $X^h(\xi)$, $h\ge 0$, 
i.e.\ its first and second fundamental forms, can be expressed in terms of the spherical angular momentum $\mathcal K=\mathcal K (z)$ of the profile curve $\xi =(x,y,z)$ and the non constant distance $z$ to $c_0$. We consider $\xi $ parameterized by the arc length $s$ and set $\xi (s) =(x(s),y(s),z(s),0)$, with $\dot x(s)^2+\dot y(s)^2+\dot z(s)^2=1$ and $z(s)>0$, $s\in I \subseteq \R $. 
For simplicity we write $X(s,t):=X^h(\xi)(s,t)$, see \eqref{eq:Xh}.

Denote by $g$ the induced metric of the helicoidal surface $X$. 
From \eqref{spherical momentum}, the entries of $g$ are given by
\begin{equation}\label{eq:metric g}
\begin{array}{c}
g_{11}=|X_s|^2=1, \quad 
 g_{12}=\langle X_s,X_t\rangle=-h \,\mathcal K(z),\vspace{0.5em} \\ 
\qquad  g_{22}=|X_t|^2=h^2 +(1-h^2)z^2.
\end{array}
\end{equation}
We have that $z^2+\dot z^2 +\mathcal K(z)^2 =1$, see \eqref{dif}.  Hence, a long straightforward computation provides that the unit normal $\nu$ of $X$ is given by
\begin{equation}\label{eq:normal}
\nu=\dfrac{1}{\alpha} \left(\nu_1,\nu_2,\nu_3,\nu_4 \right),
\end{equation}
 where
\[	
	\begin{array}{c}
		\nu_1=   z \left( -z(\dot x \sin ht+\dot y\cos ht)+\dot z(x\sin ht+y\cos ht) \right),\vspace{0.5em} \\ 
		\nu_2= z \left( z(\dot x \cos ht-\dot y\sin ht)+\dot z (-x\cos ht+y\sin ht ) \right), \vspace{0.5em} \\ 
		\nu_3=- z \mathcal K (z)\cos t  - h\dot z \sin t, \qquad 
		\nu_4=- z \mathcal K (z)\sin t  + h\dot z \cos t,
	\end{array}
\]
with
\begin{equation}\label{eq:alpha}
\alpha =\alpha (z):=\sqrt{h^2 \dot z^2 +  z^2}=\sqrt{ h^2(1-\mathcal K(z)^2) + (1-h^2) z^2 }>0.
\end{equation}
Notice that $\alpha^2=g_{11}g_{22}-g_{12}^2$.
Hence, since $\kappa = \det (\xi, \dot \xi, \ddot \xi)$, we get that the entries of the second fundamental form of $X$ are given by
\begin{equation}\label{eq:2ff hij}
\begin{array}{c}
\sigma_{11}=\langle X_{ss},\nu\rangle=\dfrac{ z k(z)}{\alpha (z)}=\dfrac{ z \mathcal K'(z)}{\alpha(z)},
\quad \sigma_{12}=\langle X_{st},\nu\rangle=\dfrac{h(1-\mathcal K(z)^2)}{\alpha(z)},\vspace{0.5em} \\ 
 \qquad \sigma_{22}=\langle X_{tt},\nu\rangle=\dfrac{(1-h^2)z^2\mathcal K(z)}{\alpha(z)}.
\end{array}
\end{equation}
From \eqref{eq:2ff hij}, we deduce that the mean curvature $H$ of the surface $X$ is given by
\begin{equation}\label{eq:mean-curvature}
2H(z)=
\dfrac{\left( h^2 + (1-h^2) z^2 \right) z\, \mathcal K'(z) + \left( 2 h^2(1-\mathcal K(z)^2) +(1-h^2) z^2 \right)\mathcal K(z)}
{\left( h^2(1-\mathcal K(z)^2) + (1-h^2) z^2 \right)^{3/2}}.
\end{equation}
And its Gauss curvature is $K_{\text{G}}(z)=1+K_{\text{ext}}(z)$, where the extrinsic curvature is 
\begin{equation}\label{eq:gauss-curvature}
K_{\text{ext}}(z)=\dfrac{(1-h^2) z^3\, \mathcal K (z) \mathcal K'(z)
-h^2 (1 - \mathcal K(z)^2)^2 }{\left( h^2(1-\mathcal K(z)^2) + (1-h^2) z^2 \right)^2}.
\end{equation}

\begin{remark}\label{rotPrinCur}
When $h=0$, using \eqref{eq:metric g}, \eqref{eq:2ff hij} and \eqref{eq:alpha}, we have that $g_{12}=0=\sigma_{12}$ and $\alpha (z)=z$; that is, the coordinate curves $X(s,\cdot)$ and $X(\cdot{,t})$ are curvature lines. Therefore, the principal curvatures of a rotational surface $X^0(\xi)$ in $\s^3$ are given by $\kappa_i=\sigma_{ii}/g_{ii}$, $i=1,2$, that is:
\begin{equation}\label{eq:princ curv}
\kappa_1(z)=\mathcal K'(z), \ \kappa_2(z)=\frac{\mathcal K(z)}{z}.
\end{equation} 
When $h=1$, using \eqref{eq:gauss-curvature}, we clearly have that $K_{\text{ext}}\equiv -1$ and then we obtain that the Hopf immersions $X^1(\xi)$ (see Remark \ref{rem:axially}) are flat, that is, $K_{\text{G}}\equiv 0$.
\end{remark}

Inspired by \cite{BK98}, we show in the next result that if we prescribe the mean curvature $H$ or the extrinsic curvature $K_{\text{ext}}$ (or, equivalently, the Gauss curvature $K_{\text{G}}$) of a helicoidal surface $X^h(\xi)$ by means of a function $H=H(z)$ or $K_{\text{ext}}=K_{\text{ext}}(z)$ respectively, we can determine the spherical angular momentum $\mathcal K(z)$ of the profile curve $\xi=(x,y,z)$ and, as a consequence of Corollary \ref{cor:Kkey}, the helicoidal surface $X^h(x,y,z)$ up to translations along the fixed geodesic $c_0$.
\begin{theorem}\label{Th:PrescribedHK}
\begin{enumerate}[\rm (a)]
\item Given $h\geq 0$, let $H=H(z)$, $z>0$, be a continuous function. 
Then there exists a one-parameter family of helicoidal surfaces 
with pitch $h$ in $\mathbb S^3$ and mean curvature $H(z)$, uniquely determined, up to translations along $c_0$,  by the spherical angular momenta of their profile curves given by
\begin{equation}\label{eq: mom-mean}
\mathcal K(z)=\pm \dfrac{\sqrt{h^2+(1-h^2)z^2}A(z)}{\sqrt{1+h^2 A(z)^2}},
\end{equation}
where
\begin{equation}\label{eq:mom-H} 
\displaystyle z^2 A(z)= 2 \! \int \! z H(z)dz.
\end{equation}
\item Given $h\geq 0$, $h\neq 1$, let $K_{\text{ext}}=K_{\text{ext}}(z)$, $z>0$, be a continuous function. 
Then there exists a one-parameter family of helicoidal surfaces 
with pitch $h$ in $\mathbb S^3$ and extrinsic curvature  $K_{\text{ext}}(z)$, uniquely determined, up to translations along $c_0$, by the spherical angular momenta of their profile curves given by
\begin{equation}\label{eq: mom-ext}
\mathcal K(z)
=\pm\sqrt{1+\dfrac{(1-h^2)z^2B(z)}{z^2+h^2 B(z)}},
\end{equation}
where 
\begin{equation}\label{eq:mom-KGauss}
B(z)=2 \!\int \! z  K_{\text{ext}}(z) dz .
\end{equation}
\end{enumerate}
\end{theorem}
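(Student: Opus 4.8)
The plan is to invert the expressions \eqref{eq:mean-curvature} and \eqref{eq:gauss-curvature} for $H$ and $K_{\text{ext}}$ as ODEs for $\mathcal K = \mathcal K(z)$, and to show that they can be solved by a single quadrature in each case; the uniqueness statement then follows immediately from Corollary \ref{cor:Kkey}. First I would treat part (a). Looking at the numerator of \eqref{eq:mean-curvature}, the idea is to recognize that $\left(h^2+(1-h^2)z^2\right) z\,\mathcal K'(z) + \left(2h^2(1-\mathcal K^2)+(1-h^2)z^2\right)\mathcal K$ is, up to a factor, an exact derivative. The natural guess, motivated by the $\mathbb R^2$ and $\mathbb L^2$ analogues cited after \eqref{eq:Kcat}, is to introduce the auxiliary function $A(z)$ defined implicitly by $\mathcal K = \dfrac{\sqrt{h^2+(1-h^2)z^2}\,A}{\sqrt{1+h^2 A^2}}$, equivalently $A = \dfrac{\mathcal K}{\sqrt{\,h^2(1-\mathcal K^2)+(1-h^2)z^2\,}} = \dfrac{\mathcal K}{\alpha(z)}$, using \eqref{eq:alpha}. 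A direct computation of $\dfrac{d}{dz}\!\left(z^2 A(z)\right)$, substituting $\mathcal K' = (\alpha A)'$ and simplifying with the identity $z^2+\dot z^2+\mathcal K^2=1$, should collapse exactly to $2zH(z)$; hence $z^2 A(z) = 2\int zH(z)\,dz$ as in \eqref{eq:mom-H}, with the constant of integration giving the one-parameter family, and solving back for $\mathcal K$ gives \eqref{eq: mom-mean} with the $\pm$ reflecting the orientation ambiguity of $\mathcal K$ noted after \eqref{spherical momentum}.

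For part (b) the approach is the same but cleaner: in \eqref{eq:gauss-curvature} with $h\neq 1$ the numerator $(1-h^2)z^3\,\mathcal K\mathcal K' - h^2(1-\mathcal K^2)^2$ over $\alpha(z)^4$ should be matched against $\dfrac{d}{dz}$ of a suitable rational combination of $z^2$ and $\mathcal K^2$. Setting $B(z)$ so that $\mathcal K^2 = 1 + \dfrac{(1-h^2)z^2 B}{z^2+h^2 B}$, one checks — again a short computation using $\alpha^2 = h^2(1-\mathcal K^2)+(1-h^2)z^2$ — that $K_{\text{ext}}(z)$ becomes $\dfrac{1}{2z}B'(z)$, so that $B(z) = 2\int z K_{\text{ext}}(z)\,dz$, which is \eqref{eq:mom-KGauss}. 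Solving for $\mathcal K$ yields \eqref{eq: mom-ext}. In both parts, once $\mathcal K = \mathcal K(z)$ is determined (on the subinterval of $z>0$ where $\mathcal K(z)^2 + z^2 < 1$ and $\alpha(z)>0$, so the profile curve exists by Remark \ref{c}), Corollary \ref{cor:Kkey} yields a genuine helicoidal surface $X^h(\xi)$ with the prescribed curvature, unique up to translations along $c_0$, and the freedom in the integration constant parametrizes the family.

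The main obstacle I anticipate is purely computational: verifying that the two "magic" substitutions ($z^2A = \mathcal K z/\dot z$ essentially, and the analogous $B$) really do turn the right-hand sides of \eqref{eq:mean-curvature} and \eqref{eq:gauss-curvature} into perfect $z$-derivatives. This requires carefully differentiating $\mathcal K$ implicitly, eliminating $\mathcal K'$, and repeatedly using $\dot z^2 = 1 - z^2 - \mathcal K^2$ together with $\alpha^2 = h^2\dot z^2 + z^2$; the algebra is delicate because $\mathcal K$ appears both explicitly and inside $\alpha$, and one must be careful with the sign of $\dot z$. A secondary point requiring care is the degenerate behavior: the case $h=1$ must be excluded in (b) since then $\alpha^2 = 1-\mathcal K^2 = \dot z^2$ and $K_{\text{ext}}\equiv -1$ identically (Remark \ref{rotPrinCur}), so no prescription is possible; and in (a) one should note that when $h=0$ the formula \eqref{eq: mom-mean} reduces to $\mathcal K(z) = \pm z A(z)$ with $zA(z) = \frac{2}{z}\int zH\,dz$, recovering the rotational case governed by \eqref{eq:princ curv}. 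I would close by remarking that the recipe is constructive: combined with Remark \ref{c}, it produces the profile curve by three further quadratures.
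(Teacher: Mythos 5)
Your proposal is correct and takes essentially the same route as the paper, which likewise sets $A(z)=\mathcal K(z)/\alpha(z)$ and $B(z)=-z^2(1-\mathcal K(z)^2)/\alpha(z)^2$, rewrites \eqref{eq:mean-curvature} and \eqref{eq:gauss-curvature} as the ODEs $2H(z)=zA'(z)+2A(z)$ and $B'(z)=2zK_{\text{ext}}(z)$, integrates, solves back for $\mathcal K$, and gets uniqueness from Corollary \ref{cor:Kkey}. Only a cosmetic slip in your aside on $h=1$: there $\alpha^2=\dot z^2+z^2=1-\mathcal K^2$, not $\dot z^2$, but this does not affect the argument.
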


\begin{remark}\label{cte int}
	The parameter in both uniparametric families described in Theorem \ref{Th:PrescribedHK} comes from the integration constant in \eqref{eq:mom-H} and \eqref{eq:mom-KGauss}.
\end{remark}

\begin{proof}
First, given $h\ge 0$ and a profile curve $\xi$, for any helicoidal surface $X=X^h(\xi)$ we define 
$$A(z):= \frac{\mathcal K(z)}{\left( h^2(1-\mathcal K(z)^2) + (1-h^2) z^2 \right)^{1/2}}, $$ where $\mathcal K(z)$ is the spherical angular momentum of $\xi$; note that 
$A(z)$ is well defined by \eqref{eq:alpha}.
Then we deduce that \eqref{eq:mean-curvature} can be written as
\begin{equation}\label{eq:ode-mean}
2H(z)=z A'(z)+2A(z).
\end{equation}
Assume now that the mean curvature of $X$ is prescribed by a continuous function $H=H(z)$, $z>0$. We can interpret \eqref{eq:ode-mean} as the linear ordinary differential equation 
$$  A'(z)+\frac{2}{z}A(z)=\frac{2H(z)}{z} $$
that $A=A(z)$ must satisfy, whose 
general solution is given in \eqref{eq:mom-H}. From the definition of $A=A(z)$, we arrive at  \eqref{eq: mom-mean},
which is also well defined by \eqref{eq:alpha}.

On the other hand, given $h\ge 0$ and a profile curve $\xi$, for any helicoidal surface $X=X^h(\xi)$ we consider  the well defined non positive function (see \eqref{eq:alpha}) given by
$$B(z):=-\frac{z^2(1-\mathcal{K}(z)^2)}{h^2(1-\mathcal K(z)^2) + (1-h^2) z^2} \leq 0.$$
Here, equation \eqref{eq:gauss-curvature} is rewritten as  
\begin{equation}\label{eq:ode-gauss}
B'(z)=2z K_{\text{ext}}(z).
\end{equation}
If we assume that the extrinsic curvature of $X$ is prescribed by a continuous function $K_{\text{ext}}=K_{\text{ext}}(z)$, we read
\eqref{eq:ode-gauss} with unknown  $B=B(z)$ and its solution is given in \eqref{eq:mom-KGauss}. Using the definition of $B=B(z)$, a direct computation gives us
 \eqref{eq: mom-ext} if $h\neq 1$, which is also well defined by \eqref{eq:alpha}.
\end{proof}


\section{Rotational and helicoidal minimal surfaces in $\s^3$}\label{sect:RotHelMin}

In order to study helicoidal (in particular, rotational) minimal surfaces in $\s^3$, given $h\geq 0 $, we prescribe $H\equiv 0$ in Theorem \ref{Th:PrescribedHK}.
Thus, $A(z)=c/z^2$, $c\in \R,$ and we get a one-parameter family of minimal helicoidal surfaces 
with pitch $h$, determined by the family of spherical angular momenta of \eqref{eq: mom-mean} given by
\begin{equation}\label{eq:K hel min}
\mathcal K^h_c (z)=\pm \frac{c \sqrt{h^2+(1-h^2)z^2}}{\sqrt{z^4+h^2 c^2}}.
\end{equation}
Recall from part {\rm (ii)} in Remark \ref{c} that it is necessary that $z^2+ \mathcal K^h_c (z)^2 < 1$.
Using \eqref{eq:K hel min}, it is not difficult to check that this condition is equivalent to $z^4+c^2<z^2$. Writing $z=\sin \varphi$, we have that $c^2<z^2(1-z^2)=\sin^2\varphi\cos^2\varphi=\frac14\sin^2 2\varphi <\frac14 $. In addition, from \eqref{eq:K hel min} it is clear that $\mathcal K^h_{-c}=-\mathcal K^h_c$ and then it is enough to consider $c\geq 0$. Hence, we take $c\in [0, 1/2)$.  We point out that we can choose the sign $\pm$ we want until we change the orientation.
We introduce now some notation.
For any $h\geq 0$ and $0\leq c<1/2$, we denote by  $\mathrm{Hel}_c^h$ the corresponding family of helicoidal minimal surfaces determined by \eqref{eq:K hel min} according to Theorem \ref{Th:PrescribedHK}, choosing minus sign. Summarizing this reasoning, we get the following result.

\begin{corollary}\label{cor:deform Hel}
Given $h\geq 0$, the helicoidal minimal surfaces of pitch $h$ in $\s^3$ are described by the one parameter family
$\mathrm{Hel}_c^h:= X^h (\xi_c^h)$, $0\leq c<1/2$, uniquely determined, up to translations along $c_0$, by the spherical angular momenta 
\begin{equation}\label{eq:K hel min-}
\mathcal K^h_c (z) = - \frac{c\, \sqrt{h^2+(1-h^2) z^2}}{\sqrt{z^4+h^2 c^2}}, \ 0\leq c<1/2,
\end{equation}
of their profile curves $\xi_c^h$.
\end{corollary}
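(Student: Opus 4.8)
The plan is to derive Corollary \ref{cor:deform Hel} as a direct specialization of Theorem \ref{Th:PrescribedHK}(a) to the minimal case $H\equiv 0$, together with the admissibility constraint from Remark \ref{c}(ii). First I would set $H(z)\equiv 0$ in \eqref{eq:mom-H}, so that $z^2A(z)=2\int z H(z)\,dz = c$ for an arbitrary integration constant $c\in\R$; thus $A(z)=c/z^2$. Substituting this into \eqref{eq: mom-mean} and simplifying the radicand $1+h^2A(z)^2 = 1 + h^2c^2/z^4 = (z^4+h^2c^2)/z^4$ yields
\begin{equation}\label{plan:Kmin}
\mathcal K(z)=\pm\frac{\sqrt{h^2+(1-h^2)z^2}\;A(z)}{\sqrt{1+h^2A(z)^2}}=\pm\frac{c\,\sqrt{h^2+(1-h^2)z^2}}{\sqrt{z^4+h^2c^2}},
\end{equation}
which is exactly \eqref{eq:K hel min}. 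Theorem \ref{Th:PrescribedHK}(a) guarantees that for each admissible choice of the constant $c$ the corresponding helicoidal surface of pitch $h$ with this spherical angular momentum is minimal and is uniquely determined up to translations along $c_0$; this is the source of the one-parameter family.

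Next I would address the range of the parameter. By part (ii) of Remark \ref{c}, a spherical angular momentum $\mathcal K=\mathcal K(z)$ produces a genuine profile curve only where $\mathcal K(z)^2+z^2<1$. Using \eqref{plan:Kmin} I would compute
\begin{equation}\label{plan:admis}
\mathcal K(z)^2+z^2 = \frac{c^2\bigl(h^2+(1-h^2)z^2\bigr)}{z^4+h^2c^2}+z^2,
\end{equation}
and show by a short algebraic manipulation that the inequality $\mathcal K(z)^2+z^2<1$ is equivalent to $z^4+c^2<z^2$. Writing $z=\sin\varphi$, the right side becomes $c^2<z^2(1-z^2)=\tfrac14\sin^2 2\varphi\le\tfrac14$, so a nonempty admissible $z$-interval exists precisely when $c^2<\tfrac14$, i.e.\ $|c|<1/2$. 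Since $\mathcal K^h_{-c}=-\mathcal K^h_c$ by inspection of \eqref{plan:Kmin}, and changing the sign of $c$ or the global sign $\pm$ only reverses orientation (hence yields a congruent surface), it suffices to take $c\in[0,1/2)$ and the minus sign; this is the normalization recorded in the statement as $\mathrm{Hel}^h_c:=X^h(\xi^h_c)$ with \eqref{eq:K hel min-}.

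Finally I would invoke Corollary \ref{cor:Kkey}: since the profile curves have non-constant coordinate $z$ (the excluded constant-$z$ case corresponds to the standard tori of Example \ref{ex:parallel}, on which $\mathcal K\equiv-\cos\varphi_0$ is constant and $H$ is constant but nonzero unless $\varphi_0=\pi/4$, so these do not arise here for $c\neq 0$), the surface $X^h(\xi^h_c)$ is uniquely determined up to translations along $c_0$ by $\mathcal K^h_c$, and conversely every minimal helicoidal surface of pitch $h$ with non-constant $z$ has mean curvature $H\equiv 0$, so by the uniqueness half of Theorem \ref{Th:PrescribedHK}(a) its profile curve must have spherical angular momentum of the form \eqref{eq:K hel min-} for some $c\in[0,1/2)$. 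This closes the classification. I do not expect a serious obstacle here: the only point requiring care is the equivalence $\mathcal K(z)^2+z^2<1\iff z^4+c^2<z^2$ in \eqref{plan:admis} and the bookkeeping that the two sign choices in \eqref{plan:Kmin} together with $c\mapsto-c$ exhaust only orientation changes, so that the essential parameter genuinely ranges over $[0,1/2)$.
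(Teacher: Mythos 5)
Your proposal is correct and follows essentially the same route as the paper: specializing Theorem \ref{Th:PrescribedHK}(a) to $H\equiv 0$ to get $A(z)=c/z^2$, substituting into \eqref{eq: mom-mean}, reducing the admissibility condition $\mathcal K(z)^2+z^2<1$ of Remark \ref{c}(ii) to $z^4+c^2<z^2$ (hence $|c|<1/2$), and normalizing via $\mathcal K^h_{-c}=-\mathcal K^h_c$ and the orientation-reversing sign choice, with uniqueness from Corollary \ref{cor:Kkey}. Your explicit remark on the excluded constant-$z$ case is a harmless addition that the paper defers to the proof of Theorem \ref{Th:Rotmin}.
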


\subsection{Rotational minimal surfaces in $\s^3$}\label{sect:RotMin}
We start studying the rotational case corresponding to $h=0$. From \eqref{eq:K hel min-}, we arrive at $\mathcal K^0_c(z)=-c/z$, with $c\in [0, 1/2)$. 
The previous study in Section \ref{ex:Kinvz} allows us to provide the following local classification theorem, where
we summarize and easily prove results from \cite{AL15}, \cite{Br13c}, \cite{dCD83}, \cite{Ot70}, \cite{Pr16} and \cite{Ri89} concerning minimal rotational surfaces in $\s^3$. In addition, from a global point of view, we also identify the Otsuki tori and the Brendle-Kusner tori and characterize them according to \cite{Br13c}.

\begin{theorem}\label{Th:Rotmin}
The only rotational minimal surfaces in $\s^3$ are open subsets of the following surfaces:
\begin{enumerate}[\rm (i)]
\item The totally geodesic sphere $\s^2 \hookrightarrow \s^3$.
\item The Clifford torus $\s^1(1/\sqrt 2)\times \s^1 (1/\sqrt 2) \hookrightarrow \s^3$.
\item The spherical catenoids $\mathrm{Cat}_\beta$, $0<\beta <\pi /2$ (see Definition \ref{def:catenoid}).
\end{enumerate}
Furthermore, the compact spherical catenoids $\mathrm{Cat}_{\beta_q}=X^0(\mathcal C_{\beta_q})$, $q\in \Q \cap (\frac{1}{2},\frac{\sqrt 2}{2})$ (see Corollary \ref{cor:catenary} and Definition \ref{def:catenoid}), are exactly the Otsuki tori, which also agree with the Brendle-Kusner tori. 
Joint with the Clifford torus, they are the only Alexandrov immersed minimal tori in $\s^3$.
\end{theorem}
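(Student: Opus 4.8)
The plan is to read off the classification directly from the machinery already developed: Corollary \ref{cor:deform Hel} with $h=0$ tells us that every rotational minimal surface of $\s^3$ is, up to translations along $c_0$, the surface $X^0(\xi_c^0)$ whose profile curve has spherical angular momentum $\mathcal K^0_c(z)=-c/z$ for some $c\in[0,1/2)$. So the local classification reduces to a case analysis on the parameter $c$. First I would dispose of the degenerate case $z\equiv\text{const}$ separately: by Example \ref{ex:parallel} and Remark \ref{rotPrinCur}, a rotational surface with constant $z$ is a standard torus, and \eqref{eq:princ curv} gives principal curvatures $\kappa_1=0$, $\kappa_2=\sqrt{1-z^2}/z$ (since $\mathcal K(z)=-\sqrt{1-z^2}$ up to sign), so minimality forces $\kappa_2=0$, impossible, unless one reinterprets: actually for constant $z=z_0$ the profile is the parallel $\mu_{\varphi_0}$ with $\mathcal K\equiv-\cos\varphi_0$ and $\kappa\equiv0$, giving $\kappa_1=0$ and $\kappa_2=\cos\varphi_0/\sin\varphi_0$; minimality needs $\varphi_0=\pi/2$, which is excluded. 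Hence the only constant-$z$ minimal case is a limit, namely $\varphi_0=\pi/4$ giving the Clifford torus when we instead allow it as the $c\to\text{limit}$ of the family — this I would phrase as: the Clifford torus arises as the limiting member, consistent with Example \ref{ex:parallel}.

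Next, for non-constant $z$ I would split into $c=0$ and $c\in(0,1/2)$. When $c=0$ we have $\mathcal K^0_0\equiv0$, and by Corollary \ref{cor:circles} (or Example \ref{ex:Kcte}) the profile curve is a great circle, in fact $\zeta_{\pi/2}$ after a rotation around the $z$-axis, so by Example \ref{ex:great circle}(i) the surface is the totally geodesic $\s^2\hookrightarrow\s^3$; this gives (i). When $0<c<1/2$, write $2c=\sin\beta$ with $\beta\in(0,\pi/2)$; then $\mathcal K^0_c(z)=-\sin\beta/(2z)$ is exactly the spherical angular momentum characterizing the spherical catenary $\mathcal C_\beta$ by Corollary \ref{cor:catenary}, so $X^0(\xi_c^0)=X^0(\mathcal C_\beta)=\mathrm{Cat}_\beta$ by Definition \ref{def:catenoid}; this gives (iii). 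The consistency with \cite{dCD83} and \cite{Ri89} is the closing remark already recorded after Definition \ref{def:catenoid}, and the Clifford torus case (ii) I would recover both as the limit $c\to1/2$ (since $z_\beta\to1/\sqrt2$ by \eqref{z_catenary bis} as $\beta\to0$... wait, $c\to1/2$ means $\beta\to\pi/2$) — more cleanly, $c=1/2$ forces $z\equiv1/\sqrt2$ by the admissibility computation $z^4+c^2<z^2$ collapsing to equality, i.e.\ the profile degenerates to the parallel $\varphi_0=\pi/4$, which is the Clifford torus; so (ii) is the boundary member $c=1/2$ of the family, and I would state it as such.

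For the global part I would argue as follows. A compact rotational minimal surface must have compact profile curve; among the surfaces just listed, the totally geodesic sphere is compact but has genus $0$, not a torus, and the generic catenoid $\mathrm{Cat}_\beta$ has dense non-compact profile. By Corollary \ref{cor:catenary} the profile $\mathcal C_\beta$ is closed precisely when $\beta=\beta_q$ with $q\in\Q\cap(\tfrac12,\tfrac{\sqrt2}{2})$, and then $\mathrm{Cat}_{\beta_q}$ is a compact surface with zero Euler characteristic, i.e.\ a torus (or Klein bottle, but one checks orientability). These closed catenoids are precisely the rotational minimal tori of $\s^3$; on the other hand Otsuki's construction in \cite{Ot70} produces all rotational (he says "of revolution") minimal tori, parametrized by the same rationality condition via the same period function $T(\beta)$, and Penskoi's $T_{p,q}$ notation matches the rational labels — so $\mathrm{Cat}_{\beta_q}$ \emph{is} the Otsuki torus with those parameters; this is a matter of matching the two ODE normal forms and their period integrals. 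Likewise Brendle's description in \cite[Theorem 1.4]{Br13b} of the embedded-in-the-Alexandrov-sense rotational minimal tori is, after the change of profile curve noted in Remark \ref{rem:axially}, the same family, so Brendle--Kusner $=$ Otsuki $=$ $\mathrm{Cat}_{\beta_q}$. Finally, Brendle's theorem in \cite{Br13c} states that every Alexandrov-immersed minimal torus in $\s^3$ is rotationally symmetric; combined with the classification just obtained, the Alexandrov-immersed minimal tori are exactly the compact spherical catenoids together with the Clifford torus (which is embedded, hence trivially Alexandrov-immersed).

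The main obstacle is not the local classification — that is essentially a dictionary lookup against Section \ref{Sect:Momentum} — but the \emph{identification} claims in the global statement: rigorously matching $\mathrm{Cat}_{\beta_q}$ with the Otsuki tori and the Brendle--Kusner tori requires transporting between three different parametrizations of the profile curve (the arc-length spherical curve $\mathcal C_\beta$ here, Otsuki's implicit profile in \cite{Ot70}, and the disk-valued plane curve $\alpha$ of \cite{Br13b}), checking that the closure/period conditions coincide under these changes of variable, and verifying the Euler-characteristic and orientability bookkeeping (non-orientable iff the relevant parity condition holds, exactly as for the Lawson $\tau_{m,k}$ in Example \ref{ex:great circle}). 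I would handle this by computing, for each parametrization, the monodromy of the profile curve after one period of $z$ — i.e.\ the total longitude increment $\lambda_\beta(\pi)$ from \eqref{long_catenary} — and observing all three give the same function $T(\beta)$ up to an explicit reparametrization, so that rationality of $T(\beta)$ is the common closure criterion and the resulting discrete families are in canonical bijection.
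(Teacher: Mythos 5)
Your treatment of the non-constant-$z$ case is exactly the paper's argument (Corollary \ref{cor:deform Hel} with $h=0$; $c=0$ gives $\mathcal K\equiv 0$ and the totally geodesic sphere, $0<c<1/2$ gives $\mathcal K=-c/z$ and hence $\mathrm{Cat}_\beta$ with $2c=\sin\beta$ via Corollary \ref{cor:catenary}). The constant-$z$ case, however, is mishandled. The relation $\kappa_1=\mathcal K'(z)$ in \eqref{eq:princ curv} presupposes that $z$ is non-constant (it comes from Theorem \ref{K determines}, where $\mathcal K$ is a function of $z$), so it cannot be applied to the parallel $\mu_{\varphi_0}$: the geodesic curvature of $\mu_{\varphi_0}$ is $\tan\varphi_0\neq 0$, and the principal curvatures of the standard torus are $\tan\varphi_0$ and $-\cot\varphi_0$, so it is minimal precisely for $\varphi_0=\pi/4$. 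Your computation ($\kappa_1=0$, $\kappa_2=\cot\varphi_0$) leads to the false conclusion that no standard torus is minimal, and the subsequent patch --- declaring the Clifford torus the ``boundary member $c=1/2$'' of the catenoid family --- is not a proof of case (ii): the family $\mathrm{Hel}^0_c$ and the whole momentum formalism are only defined for non-constant $z$ and $c<1/2$, and a degeneration statement about profiles does not by itself establish minimality of the limiting surface. The paper simply settles this case first: constant $z$ gives the CMC standard tori, among which only the Clifford torus is minimal.

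The global identifications are where your proposal both diverges from the paper and remains a plan rather than a proof. The paper does not compare period functions: for the Brendle--Kusner tori it substitutes $r(t)=\sqrt{1-z(s)^2}$ with $t=\lambda(s)$ from \eqref{eq catenaries} into \eqref{eq:BrOt_tori} and checks the equation holds iff $C=\sin\beta$; for the Otsuki tori (in the normal form of \cite{HS12}) it finds the first integral \eqref{eq:1integral} of the support-function ODE, computes the spherical angular momentum of the profile $\xi(\theta)=(u,v,\mathcal H)$ to be $-c/z$, and then invokes Corollaries \ref{cor:Kkey} and \ref{cor:catenary} to get congruence with the catenoids. Your proposed criterion --- that all three parametrizations produce the same monodromy/period function $T(\beta)$ --- is not sufficient on its own: equality of period functions does not imply the profile curves, hence the surfaces, are congruent. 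The alternative route you gesture at (Otsuki's and Brendle's families exhaust all rotational minimal tori, so by your local classification they must be the $\mathrm{Cat}_{\beta_q}$) would work for that direction only if the exhaustiveness of those families is verified, which you cite rather than check and which the paper deliberately avoids needing by the explicit momentum computations. So the local classification is essentially the paper's proof modulo the constant-$z$ slip, but the asserted identities ``compact catenoids $=$ Otsuki tori $=$ Brendle--Kusner tori'' --- which are part of the statement --- are not actually established by your argument; the final Alexandrov-immersed claim via \cite{Br13c} is fine once those identifications are in place.
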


\begin{remark}
{\rm
Brendle  showed, pointed by Kusner (see  \cite[Theorem 1.4]{Br13b}) that there exists an infinite family of minimal tori in $\s^3$ which are Alexandrov immersed, but fail to be embedded. In particular, we are going to prove that they are nothing but the so-called Otsuki tori (see \cite{Ot70,HS12}). 
Moreover, he proved in \cite{Br13c} that any minimal torus in $\s^3$ which is Alexandrov immersed must be rotationally symmetric.

\noindent On the other hand, Brendle solved Lawson Conjecture affirmatively proving in \cite{Br13a} that the Clifford torus is the only {\em embedded} minimal torus in $\s^3$. 
In this line, we conclude in Theorem \ref{Th:Rotmin} that the compact spherical catenoids, which coincide with the Otsuki and Brendle-Kusner tori, are the only {\em Alexandrov immersed} minimal tori in $\s^3$.
}
\end{remark}
We use \eqref{z_catenary bis} and \eqref{long_catenary} in the parametrization \eqref{eq:paramXrot} composed with a stereographic projection into $\R^3$ to visualize some of these tori in Figure \ref{Otsuki}.

\begin{figure}[h!]
\begin{center}
\includegraphics[height=4cm]{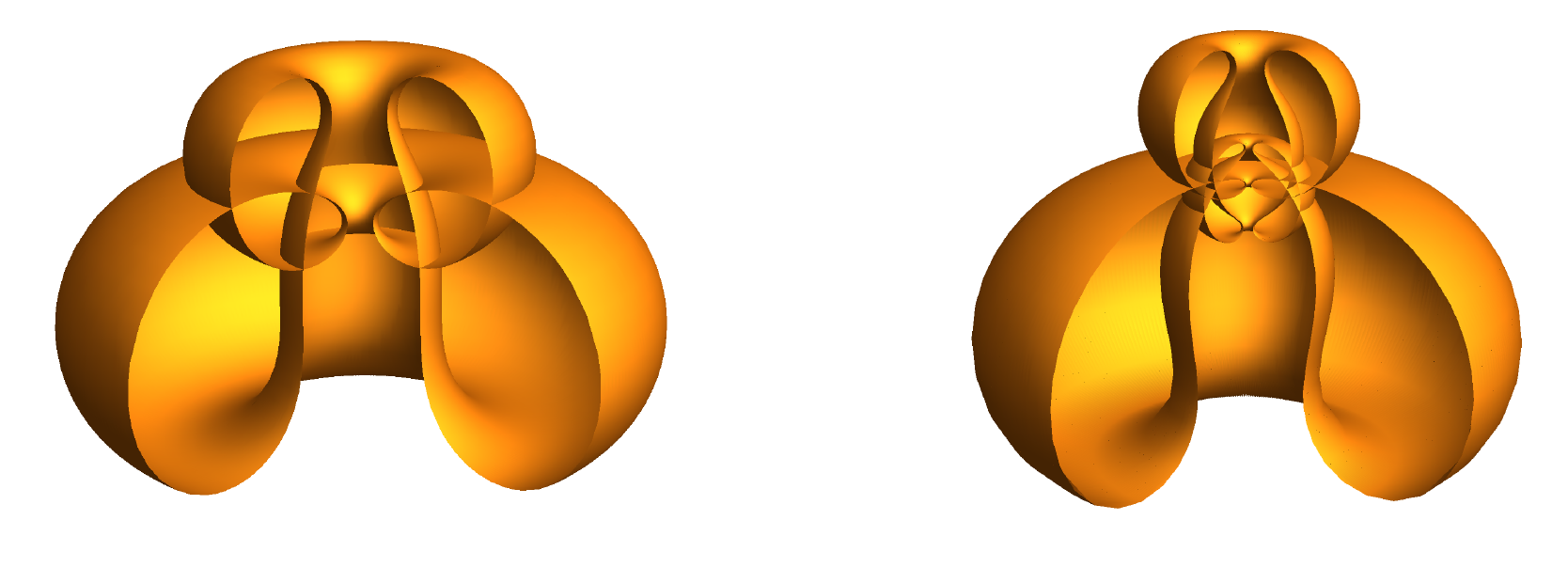}
\caption{ 
Open sights (with $t\in (0,3\pi/2)$) of the Otsuki-Brendle-Kusner spherical catenoids 
$\mathrm{Cat}_{\beta_{2/3}}$ 
and $\mathrm{Cat}_{\beta_{3/5}}$.
}
\label{Otsuki}
\end{center}
\end{figure}

\begin{proof}[Proof of Theorem \ref{Th:Rotmin}]
We are going to use Corollary \ref{cor:deform Hel} with $h=0$.
But first, we must have into account the case that $z$ is constant, see Corollary \ref{cor:Kkey}.
We know that this corresponds to the standard tori (see Example \ref{ex:parallel}), which are CMC surfaces and only the Clifford torus is minimal. This leads to case {\rm (ii)}. 

Next, we determine the surfaces $\mathrm{Hel}_c^0$ through  $\mathcal K^0_c(z)=-c/z$, with $c\in [0, 1/2)$.
If $c=0$, we get that $\mathcal K\equiv 0 $ and we arrive at case {\rm (i)}, see Remark \ref{hxi}. 
If $c\neq 0$, we proceed as in Section \ref{ex:Kinvz}, and
Corollary \ref{cor:catenary} and Definition \ref{def:catenoid} gives us case {\rm (iii)}, since 
$\mathrm{Cat}_\beta=X^0(\mathcal C_\beta)$, $\beta \in (0,\pi/2)$, is exactly $\mathrm{Hel}_c^0$, with $c=\frac12 \sin \beta $. 

On the other hand, Brendle-Kusner tori are described in the following way
(see \cite[Theorem 1.4]{Br13b} for more details).
Consider an immersion of the form
$$
F(t,\theta)=\left( r(t)\, e^{it}, \sqrt{1-r(t)^2}\, e^{i\theta} \right),
$$
where $r(t)$ is a smooth function taking values in the interval $(0,1)$ and satisfying the following differential equation for some constant $C\in (0,1)$:
\begin{equation}\label{eq:BrOt_tori}
\frac{r'^2}{r^4(1-r^2)}+\frac{1}{r^2 (1-r^2)}=\frac{4}{C^2}.
\end{equation}
It turns to be that $F$ is doubly periodic under certain rationality condition involving the constant $C$ and the interval $(1/2,\sqrt 2 /2)$.
Now we consider the spherical curve given by $\xi (t)= \left(r(t)\cos t, r(t) \sin t,\sqrt{1-r(t)^2}\right) $. 
We have that $ |\xi'|^2=\frac{r'^2+r^2(1-r^2)}{1-r^2}$. 
Now we write $r(t)=\cos \varphi (s)= \sqrt{1-z(s)^2}$ through the change of variable $t=\lambda (s)$ given by \eqref{eq catenaries}. Then it is not difficult to check that \eqref{eq:BrOt_tori} is satisfied if and only if $C=\sin \beta$, where $\beta \in (0,\pi/2)$ is the corresponding parameter of the spherical catenoid $\mathrm Cat_{\beta_q}$, and the aforementioned rationality condition on $C$ is exactly \eqref{eq:Tcat}.
The Clifford torus corresponds to the constant solution $r=z=1/\sqrt 2$, that is, the limiting case $\beta = \pi/2$.

To finish the proof, let us prove that Otsuki tori also coincide with the spherical catenoids described in Definition \ref{def:catenoid}. We follow the description of Otsuki tori given in \cite{HS12}.  For any $\alpha\in [0,2\pi)$, Otsuki tori are of the form
\[
\big(\mathcal{H(\theta)}\cos \alpha,
\mathcal{H(\theta)}\sin \alpha,
u(\theta),
v(\theta)\big),
\]
with $\mathcal{H(\theta)}:=\sqrt{1-h(\theta)^2-h'(\theta)^2}$, $u(\theta):=h(\theta)\sin\theta+h'(\theta)\cos\theta$, $v(\theta):=h'(\theta)\sin\theta-h(\theta)\cos\theta$
and where the support function $h=h(\theta) \in (0,1)$ is a non-constant periodic solution to the IVP given by
\[
2h(1-h^2)h'' +h'^2(1-h^2)(2h^2-1)=0, \quad 0<h(0)\leq \frac{1}{\sqrt{2}}, \quad h'(0)=0.
\]
We are able to find out a first integral to the above ODE. Concretely:
\begin{equation}\label{eq:1integral}
\frac{h(1-h^2-h'^2)}{\sqrt{1-h^2}}=c,
\end{equation}
for some constant $c\in\R$ depending on $h(0)$.
It is clear that we are dealing with $X^0(\xi )$, where $\xi = \xi (\theta)$
is the spherical curve given by $\xi(\theta)=(u(\theta),
v(\theta), \mathcal{H(\theta)})$.
We compute its spherical angular momentum and obtain that
\[
	\mathcal{K}(\theta)=-h(\theta) \sqrt{\frac{1-h(\theta)^2-h'(\theta)^2}{1-h(\theta)^2}}.
\]
Taking into account \eqref{eq:1integral} and that $z(\theta)=\mathcal{H(\theta)}=\sqrt{1-h(\theta)^2-h'(\theta)^2}$, we deduce that $\mathcal{K}(z)=-c/z$.  
Using Corollary \ref{cor:Kkey}, Corollary \ref{cor:catenary} and Definition \ref{def:catenoid}, we conclude the result.
\end{proof}

\subsection{Helicoidal minimal surfaces in $\s^3$}\label{sect:HelMin}
We go on by studying the proper helicoidal minimal surfaces $\mathrm{Hel}_c^h$ in $\s^3$, corresponding to $h>0$ (and $0\leq c < 1/2$) in Corollary \ref{cor:deform Hel}. We emphasize that, using Remark \ref{hxi}, they provide a deformation by helicoidal minimal surfaces of the Lawson spherical helicoids $\mathrm{Hel}_0^h$ corresponding to $c=0$. Moreover, one may easily check that for any $h > 0$, the Lawson helicoids  $\mathrm{Hel}_0^h$ and $\mathrm{Hel}_0^{1/h}$ are congruent and the Clifford torus corresponds to $\mathrm{Hel}_0^1$, see Example \ref{ex:great circle}-{\rm (ii)}.

On the other hand, recall that for any $\beta \in (0,\pi/2)$, $\mathrm{Cat}_\beta$ is the spherical catenoid 
$ X^0(\mathcal C_\beta)$ (see Definition \ref{def:catenoid}). Note that $\mathrm{Cat}_\beta = \mathrm{Hel}_{  c}^0$, when $  c=\frac12 \sin  \beta $.
%
%
In the following result we match spherical catenoids and Lawson spherical helicoids from a local isometric point of view. 
\begin{theorem}\label{Th:Cat to Hel}
Any spherical catenoid is locally isometric to two, and only two, Lawson spherical helicoids. 
Concretely, the catenoid $\mathrm{Cat}_\beta $, $\beta \in (0,\pi/2)$, is locally isometric to the helicoids 
$\mathrm{Hel}^{\tan (\beta/2)}_0$ (with pitch less than one) and $\mathrm{Hel}^{\cot (\beta/2)}_0$ (with pitch greater than one).
\end{theorem}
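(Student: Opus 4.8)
The plan is to compare the first fundamental forms of the two families directly. Both are warped products $ds^2+f(s)^2\,dt^2$ with $f^2$ an affine image of a sinusoid of period $\pi$, and the question of local isometry reduces --- because such metrics are rigid when the Gauss curvature is non-constant --- to matching these sinusoids, which produces exactly the two prescribed pitches. Concretely, by \eqref{eq:metric g} with $h=0$ and \eqref{z_catenary bis} the induced metric of $\mathrm{Cat}_\beta=X^0(\mathcal C_\beta)$ is
\[
g=ds^2+z_\beta(s)^2\,dt^2=ds^2+\Bigl(\tfrac12+\tfrac{\cos\beta}{2}\sin 2s\Bigr)dt^2,
\]
while for the Lawson helicoid $\mathrm{Hel}^h_0=X^h(\zeta_{\pi/2})$, whose profile curve is the geodesic $\zeta_{\pi/2}(s)=(\cos s,0,\sin s,0)$ of Example \ref{ex:great circle} (so $\mathcal K\equiv0$ and $z(s)=\sin s$), formula \eqref{eq:metric g} gives
\[
g_h=d\sigma^2+\bigl(h^2+(1-h^2)\sin^2\sigma\bigr)d\tau^2=d\sigma^2+\Bigl(\tfrac{1+h^2}{2}-\tfrac{1-h^2}{2}\cos 2\sigma\Bigr)d\tau^2 .
\]
Since $\beta\in(0,\pi/2)$ we have $\cos\beta\neq0$, so the coefficient of $dt^2$ in $g$ is genuinely non-constant; in particular any isometric partner must have $h\neq1$.

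For the existence of the two isometries I would exhibit them explicitly. If $h=\tan(\beta/2)$, then $\tfrac{1-h^2}{1+h^2}=\cos\beta$ by the half-angle identity, and the substitution $\sigma=s+\pi/4$, $\tau=t/\sqrt{1+h^2}$ (for which $\cos2\sigma=-\sin2s$) carries $g_h$ to $g$; if $h=\cot(\beta/2)$, then $\tfrac{h^2-1}{1+h^2}=\cos\beta$, and $\sigma=\pi/4-s$, $\tau=t/\sqrt{1+h^2}$ (for which $\cos2\sigma=\sin2s$) again carries $g_h$ to $g$. These are routine trigonometric verifications; in fact, since $z_\beta(s)^2$ and $h^2+(1-h^2)\sin^2\sigma$ are everywhere positive, the substitutions give global isometries of the two abstract metrics. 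As $\beta/2\in(0,\pi/4)$, the first pitch is less than one and the second is greater than one, and both tend to $1$ as $\beta\to\pi/2$, which is consistent with the Clifford torus being self-conjugate.

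For the ``only two'' part, write each metric in the form $ds^2+f(s)^2\,dt^2$ with $f^2$ a non-constant sinusoid. The Gauss curvature $K=-f''/f$ of such a metric is a non-constant function of $s$; since $K$ is intrinsic, a local isometry must preserve the unit vector field $\nabla K/|\nabla K|$ defined on the dense open set $\{dK\neq0\}$, and its integral curves are precisely the arc-length-parametrized curves $t=\mathrm{const}$. Hence an isometry from $\mathrm{Cat}_\beta$ onto an open piece of $\mathrm{Hel}^h_0$ must have the form $s\mapsto\varepsilon s+s_0$ with $\varepsilon=\pm1$ and $t\mapsto\lambda^{-1}t+\mathrm{const}$, which forces $f_{\mathrm{cat}}^2(s)=\lambda^2f_{\mathrm{hel}}^2(\varepsilon s+s_0)$. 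Equating mean values and amplitudes of the two sinusoids yields $\tfrac12=\lambda^2\tfrac{1+h^2}{2}$ and $\tfrac{\cos\beta}{2}=\lambda^2\tfrac{|1-h^2|}{2}$, hence $\cos\beta=\tfrac{|1-h^2|}{1+h^2}$, which gives $h^2=\tan^2(\beta/2)$ if $h<1$ and $h^2=\cot^2(\beta/2)$ if $h>1$. Combined with the existence step, this proves the theorem. (Alternatively, one may use \eqref{eq:gauss-curvature} to compare the extreme values of the Gauss curvature: $K_{\mathrm G}=1-\tfrac{\sin^2\beta}{4z^4}$ on $\mathrm{Cat}_\beta$ ranges over the interval with endpoints $1-\cot^2(\beta/2)$ and $1-\tan^2(\beta/2)$, while $K_{\mathrm G}=1-\tfrac{h^2}{(h^2+(1-h^2)z^2)^2}$ on $\mathrm{Hel}^h_0$ ranges over the interval with endpoints $1-h^2$ and $1-h^{-2}$, and matching these endpoints again forces the same two pitches.)

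The step I expect to be the main obstacle is the rigidity argument just used: making precise that a local isometry between these two cohomogeneity-one surfaces must respect the foliation by parallels. The gradient-of-curvature argument handles it, but it relies on $K$ being non-constant --- which is where $\beta\neq\pi/2$ enters --- and on a short continuity remark to extend the conclusion across the isolated circles where $\nabla K$ vanishes. The remaining steps are elementary trigonometric computations, and the explicit change of variables above is essentially forced by matching the constant and oscillating parts of the two warping functions.
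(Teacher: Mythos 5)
Your existence argument is the same as the paper's: both write $\mathrm{Cat}_\beta$ and $\mathrm{Hel}^h_0$ as warped products $ds^2+f(s)^2dt^2$ via \eqref{eq:metric g} and \eqref{z_catenary bis}, apply the shift $s\mapsto s\pm\pi/4$ together with the rescaling $t\mapsto t/\sqrt{1+h^2}$, and match the warping functions through $\cos\beta=|1-h^2|/(1+h^2)$, i.e.\ $h=\tan(\beta/2)$ or $h=\cot(\beta/2)$; your computations here are correct. Where you genuinely go beyond the paper is the ``only two'' half. The paper's proof establishes only that its particular map $\Phi$ is an isometry exactly when \eqref{eq:hbeta} holds, and then invokes the bijectivity of the correspondence $h\leftrightarrow\beta$ to ``finish the proof''; strictly speaking this shows each catenoid is isometric to \emph{at least} those two helicoids, without excluding an exotic local isometry onto some other $\mathrm{Hel}^{h'}_0$. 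Your gradient-of-Gauss-curvature rigidity argument closes that gap: since $K$ depends only on $s$ and is non-constant (here $\beta\neq\pi/2$ is used), a local isometry must carry the unit field $\nabla K/|\nabla K|=\pm\partial_s$ to its counterpart, hence has the form $s\mapsto\varepsilon s+s_0$, $t\mapsto\lambda^{-1}t+\mathrm{const}$, and equating mean value and amplitude of the two period-$\pi$ sinusoids forces the stated pitches. This is sound (the identity of the two sinusoids on an open interval propagates everywhere, and the degeneracy on the isolated circles where $\nabla K=0$ is handled by continuity, as you note). One small caution on your parenthetical alternative: comparing the \emph{ranges} of $K_{\mathrm G}$ computed from \eqref{eq:gauss-curvature} only constrains a local isometry to make the ranges overlap, not coincide, so that version would need the surjectivity of the isometry to be an honest proof of uniqueness; your main argument does not suffer from this.
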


\begin{remark}
\label{h1 Clifford}
{\rm Recall that the limiting case $\beta = \pi/2$ degenerates into the Cliford torus $ \mathrm{Hel}^{1}_0 \equiv \mathrm{Cat}_{\pi/2} $.}
\end{remark}

\begin{proof}[Proof of Theorem \ref{Th:Cat to Hel}]
From formulae \eqref{eq:metric g} for the induced metric of a helicoidal surface and equations \eqref{z_catenary bis} and \eqref{LawHelicoids}, we may compute the first fundamental form of both $\mathrm{Cat}_\beta $ and $\mathrm{Hel}^h_0 $.
For $\mathrm{Cat}_\beta $, $\beta \in (0,\pi/2)$, the first fundamental form $I$, in coordinates $(s,t)$, is given by
\begin{equation}\label{eq:I Cat}
I=ds^2+\frac{1+\cos\beta \sin 2s}{2}dt^2,
\end{equation}
and for $\mathrm{Hel}^h_0 $, $h>0$, the first fundamental form $\tilde I$, in coordinates $(\tilde s,\tilde t)$, is
\begin{equation}\label{eq:I Hel}
\tilde I=d\tilde s^2+ (h^2 \cos^2 \tilde s + \sin^2 \tilde s) d\tilde t^2.
\end{equation}
For simplicity, we denote by $Y_\beta$ the corresponding parametrization of $\mathrm{Cat}_\beta$ and by $Y^h$ the one of $\mathrm{Hel}_0^h$.
We define the map $\Phi:=Y^h \circ \phi \circ (Y_\beta)^{-1}$ from $\mathrm{Cat}_\beta \equiv Y_\beta(s,t)$ to $\mathrm{Hel}^h_0\equiv Y^h(\tilde s,\tilde t) $, where
$$(\tilde s, \tilde t)=\phi (s,t)=\left(s\pm\frac{\pi}{4},\frac{t}{\sqrt{1+h^2}}\right),$$
the sign $\pm$ according to $h \lessgtr 1$.
Then, using \eqref{eq:I Hel}, we get that 
$$
\tilde I
=ds^2+ \frac{1}{2}\left( 1\pm \frac{1-h^2}{1+h^2} \sin 2s \right) d t^2.
$$
Looking at \eqref{eq:I Cat}, we must take 
\begin{equation}\label{eq:hbeta}
\cos \beta = \frac{1-h^2}{1+h^2} \ \text{ if }\ h<1, \quad\text{ or  }\quad \cos \beta = \frac{h^2-1}{1+h^2} \ \text{ if }\ h>1,
\end{equation}
in order to get $\tilde I = I$ and so $\Phi$ is the desired local isometry. The case $h=1$ corresponds to the Clifford torus (see Remark \ref{h1 Clifford}).
We point out that \eqref{eq:hbeta} provides a monotonically decreasing correspondence $h\in (0,1) \mapsto \beta \in(0,\pi/2)$,
and a monotonically increasing correspondence $h\in (1,\infty) \mapsto \beta \in(0,\pi/2)$,
whose inverse maps are precisely $h=\tan (\beta/2)$ and $h=\cot (\beta/2)$ respectively. This finishes the proof.
\end{proof}

We now aim to identify the surfaces $\mathrm{Hel}_c^h$, $0\leq c<1/2$, $h> 0$, $h\neq 1$, in terms of the {\em associated surfaces} (in the sense defined in \cite[Section 13]{La70}) of a given spherical catenoid $\mathrm{Cat}_\beta $, $\beta \in (0,\pi/2)$. 
For this purpose, recall the following fact, coming from the proof of \cite[Theorem 8]{La70}:

{\it For any minimal immersion $X\colon S \rightarrow \s^3$ of a simply connected surface $S$, there exists a differentiable, $2\pi$--periodic family of minimal isometric immersions $X_\theta \colon S \rightarrow \s^3$. Moreover, up to congruences, the maps $X_\theta$, $0\leq \theta \leq \pi$, represent (extensions of) all local, isometric, minimal immersions of $S$ into $\s^3$. The surface $X_{\pi/2} $ is called the conjugate surface of $X$.}

To introduce the family of associated surfaces in practice, we proceed as described in \cite{dCD83}:
{\it
Let $(x,y)$ be isothermal coordinates for the minimal immersion $X: S \rightarrow \s^3$. Denote by $I=E(dx^2+dy^2)$ and $I\!I=\sigma_{11}dx^2+2\sigma_{12}dxdy+\sigma_{22}dy^2$ the first and second fundamental forms of $X$, respectively. Set $\psi = \sigma_{11}-i\sigma_{12}$ and define a family of quadratic forms depending on a parameter $\theta \in [0,2\pi]$ by
$$
\sigma_{11}(\theta)= {\rm Re} (e^{i\theta}\psi), \ \sigma_{12}(\theta)= {\rm Im} (e^{i\theta}\psi), \
\sigma_{22}(\theta)= -{\rm Re} (e^{i\theta}\psi).
$$
Then $I_\theta=I$ and $I\!I_\theta= \sigma_{11}(\theta)dx^2+2\sigma_{12}(\theta)dxdy+\sigma_{22}(\theta)dy^2$ satisfy the Gauss and Codazzi equations, thus giving rise to the isometric family $X_\theta: S  \rightarrow \s^3$ of minimal immersions. The immersion $X_{\pi/2}$ is the conjugate immersion to $X_0=X$.
}

We are now able to prove in detail the following result commented in \cite[Remark 3.34]{dCD83}.

\begin{theorem}\label{Th:minimal associated}
Given a spherical catenoid 
$ \text{Cat}_\beta $,  $\beta \in (0,\pi/2)$, the associated immersions $( \text{Cat}_\beta)_\theta$, $0\leq \theta \leq \pi $, coincide with 
the helicoidal minimal immersions $\mathrm{Hel}_c^h$,  $0\leq c<1/2$, $h\geq 0$, $h\neq 1$, where
\begin{equation}\label{eq:deform min}
\frac{(1-h^2)^2 c^2+h^2}{(1+h^2)^2}=\frac{\sin^2 \beta}{4},
\qquad \cos \theta = \frac{c (1-h^2)  }{\sqrt{(1-h^2)^2 c^2}+h^2} .
\end{equation}

In particular, the conjugate surface of the spherical catenoid $\text{Cat}_\beta= (\text{Cat}_\beta)_0$ is the Lawson spherical helicoid $\mathrm{Hel}_0^h=(\text{Cat}_\beta)_{\pi/2}$, with 
$ \frac{h^2}{(1+h^2)^2}=\frac{\sin^2 \beta}{4}$.
\end{theorem}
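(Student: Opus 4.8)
The plan is to compute the second fundamental form of a spherical catenoid $\mathrm{Cat}_\beta$ in isothermal coordinates, apply the Lawson rotation $e^{i\theta}$ described above to obtain the associated immersions, and then recognize each resulting surface among the $\mathrm{Hel}_c^h$ by matching their second fundamental forms through the characterization given by the spherical angular momenta in Corollary \ref{cor:deform Hel}. First I would pass from the arc-length parameter $s$ in \eqref{eq:I Cat} to a conformal parameter $u=u(s)$ with $du = ds/\sqrt{(1+\cos\beta\sin 2s)/2}$, so that $I = \rho(u)^2(du^2+dt^2)$ with $\rho^2 = (1+\cos\beta\sin 2s)/2$. Using Remark \ref{rotPrinCur} (the case $h=0$) the principal curvatures of $\mathrm{Cat}_\beta$ are $\kappa_1 = \mathcal K'(z) = c/z^2$ and $\kappa_2 = \mathcal K(z)/z = -c/z^2$, confirming minimality and giving $\sigma_{11} = -\sigma_{22}$, $\sigma_{12}=0$ in the $(s,t)$ curvature-line coordinates; converting to the conformal coordinates $(u,t)$ yields $\psi = \sigma_{11}(u,t) - i\sigma_{12}(u,t)$, which I expect to be (up to the conformal factor) a constant multiple of $c\,\rho^2/z^2$ times a unit modulus factor — in fact I anticipate $\psi$ is \emph{constant} in the conformal coordinate, as is typical for these one-parameter families, because $z^2\rho^{-2}$ should be affine in $\sin 2s$.

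Next I would write down the associated family: $\sigma_{11}(\theta) = \mathrm{Re}(e^{i\theta}\psi)$, $\sigma_{12}(\theta)=\mathrm{Im}(e^{i\theta}\psi)$, $\sigma_{22}(\theta) = -\mathrm{Re}(e^{i\theta}\psi)$, with the same $I$. Then I would take a candidate $\mathrm{Hel}_c^h$, use \eqref{eq:metric g}, \eqref{eq:alpha} and \eqref{eq:2ff hij} together with $\mathcal K = \mathcal K_c^h(z)$ from \eqref{eq:K hel min-} to express its first and second fundamental forms, pass to \emph{its} isothermal coordinates, and form its $\psi$-function. Minimality of $\mathrm{Hel}_c^h$ is already guaranteed by construction; what remains is to impose (i) that the two metrics agree — this should pin down one relation between $(c,h)$ and $\beta$, giving the first equation in \eqref{eq:deform min} — and (ii) that the holomorphic quadratic differentials $\psi\,dw^2$ agree up to the rotation $e^{i\theta}$, which determines $\cos\theta$ as in the second equation of \eqref{eq:deform min}. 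Because a minimal immersion of a simply connected surface into $\s^3$ is determined up to congruence by its metric and its Hopf differential (the quoted fact from \cite[Theorem 8]{La70}), matching $I$ and $\psi\,dw^2$ up to a unimodular constant is exactly matching the surfaces up to congruence within the associated family, so this suffices. The special case $h=1$ is excluded since then $K_{\mathrm{ext}}\equiv -1$ (Remark \ref{rotPrinCur}) and the surface is flat, not a catenoid; the limiting case $\beta=\pi/2$ is the Clifford torus $\mathrm{Hel}_0^1$ as noted in Remark \ref{h1 Clifford}.

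For the final assertion about the conjugate surface, I would simply set $\theta=\pi/2$ in \eqref{eq:deform min}: the second equation forces $\cos(\pi/2)=0$, hence $c(1-h^2)=0$; since $h\neq 1$ this gives $c=0$, so $(\mathrm{Cat}_\beta)_{\pi/2} = \mathrm{Hel}_0^h$ is a Lawson spherical helicoid, and substituting $c=0$ into the first equation of \eqref{eq:deform min} yields $h^2/(1+h^2)^2 = \sin^2\beta/4$. Note this is consistent with Theorem \ref{Th:Cat to Hel} and \eqref{eq:hbeta}: indeed $h=\tan(\beta/2)$ gives $h/(1+h^2) = \tfrac12\sin\beta$, and $h=\cot(\beta/2)$ gives the same, so the two Lawson helicoids locally isometric to $\mathrm{Cat}_\beta$ are precisely its conjugate surface (counted with the two congruent descriptions of pitch $h$ and $1/h$).

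\textbf{Main obstacle.} The delicate point is the bookkeeping when passing to isothermal coordinates on $\mathrm{Hel}_c^h$ for $h>0$: unlike the rotational case, the coordinate curves of $X^h(\xi)$ are not curvature lines ($g_{12}$ and $\sigma_{12}$ are nonzero by \eqref{eq:metric g} and \eqref{eq:2ff hij}), so I must first solve for the conformal change of variables explicitly enough to read off the Hopf differential $\psi\,dw^2$ as a function that can be compared with that of $\mathrm{Cat}_\beta$. I expect that the combination $\alpha(z)^2$ and the entries in \eqref{eq:2ff hij} conspire so that $\psi$ is again constant in the conformal coordinate — which would make the comparison immediate — but verifying this, and correctly tracking the unimodular phase that produces $\cos\theta$ in \eqref{eq:deform min}, is where the real computation lies.
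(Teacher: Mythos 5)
Your proposal takes essentially the same route as the paper's proof: pass to isothermal coordinates on $\mathrm{Cat}_\beta$ (where the Hopf data is the constant $\tfrac12\sin\beta$, rotated by $e^{i\theta}$ in the associated family), compute the first and second fundamental forms of $\mathrm{Hel}_c^h$ in conformal coordinates, match them, and conclude by uniqueness of a surface with prescribed fundamental forms. The constancy of $\psi$ for $\mathrm{Hel}_c^h$ that you anticipate is exactly what the paper confirms through its explicit change of variables \eqref{eq:Phi1}--\eqref{eq:Phi2-}, which gives $\psi=\bigl(c(1-h^2)-ih\bigr)/(1+h^2)$ and hence precisely the relations \eqref{eq:deform min}.
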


\begin{remark}\label{coher}
{\rm 
Bearing in mind Theorem \ref{Th:Cat to Hel}, it is easy to check that $h^2/(1+h^2)^2=\frac14 \sin^2 \beta $ if, and only if, $h=\tan \beta/2$ or $h=\cot \beta/2$. In Figure \ref{graph deform}, we draw the curves $c_\beta$ given by
$$ c_\beta \equiv \frac{(1-h^2)^2 c^2+h^2}{(1+h^2)^2}=\frac{\sin^2 \beta}{4} $$
in the region  $0\leq c <1/2$, $h\geq 0$,
for different values of  $\beta \in (0,\pi/2)$. Each curve $c_\beta$ has two connected components separated by the line $h=1$.

\noindent It is not difficult to deduce from \eqref{eq:deform min} that $\theta=0 \Leftrightarrow c= \frac12 \sin \beta$, $h=0 $; $\theta \in (0,\pi /2) \Leftrightarrow h <1$; $\theta = \pi /2 \Leftrightarrow c=0 $, $h=\tan \beta/2$  or  $h=\cot \beta/2$; and $ \theta \in (\pi/2,\pi) \Leftrightarrow h>1$.
}
\end{remark}

\begin{figure}[h!]
	\begin{center}
		\includegraphics[height=6cm]{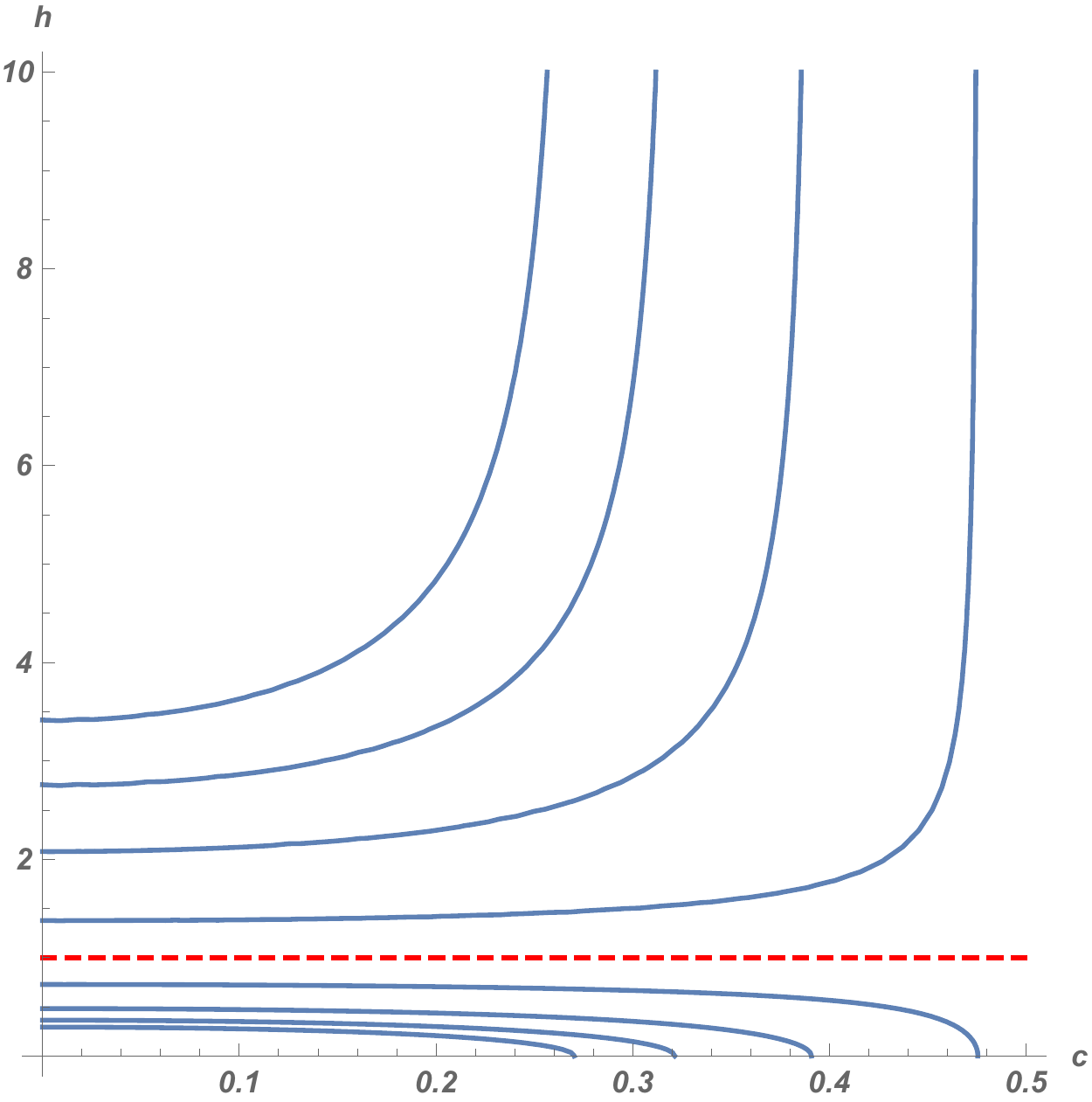}
		\caption{ 
			Representation of the associated immersions to a spherical catenoid.
		}
		\label{graph deform}
	\end{center}
\end{figure}

\begin{remark}\label{h1Cli}
	{\rm The immersions $\mathrm{Hel}_c^1$, $0\leq c<1/2$, are Hopf minimal surfaces in $\s^3$ (see Remark \ref{rotPrinCur}), so they are flat. Using a classical result in \cite{La69}, all of them provide representations of the Clifford torus $\mathrm{Hel}_0^1$ as helicoidal surfaces.
	}
\end{remark}

\begin{proof}[Proof of Theorem \ref{Th:minimal associated}]
Let $\mathrm{Cat}_\beta $, $\beta \in (0,\pi/2)$, be  the  spherical catenoid given in Definition~\ref{def:catenoid} and let $(\mathrm{Cat}_{\beta})_\theta$, $\theta \in (0,\pi)$, be its associated immersions. Let $(\tilde s, \tilde t)$ be the local coordinates for the catenoid $\mathrm {Cat}_{\beta}$ given by \eqref{eq:paramXrot} when the generatrix curve is the spherical catenary $\mathcal C_\beta $ studied in Section \ref{ex:Kinvz}.
In order to define the associated immersions   as in \cite[Section 13]{La70} 
we  take isothermal coordinates $(x,y)$ defined by
\begin{equation}\label{eq:conpar}
dx=\frac{1}{\tilde z(\tilde s) }d\tilde s, \quad  dy=d\tilde t,
\end{equation}
where $\tilde z= \tilde z_\beta $ is given by \eqref{z_catenary bis}.
In such new coordinates, using \eqref{eq:metric g}, \eqref{eq:2ff hij} and \eqref{eq:Kcat}, the coefficients of the first fundamental form $\widetilde{g}$ and the second fundamental form $\widetilde{\sigma}$ of $(\mathrm{Cat}_{\beta})_\theta$, $0\leq\theta<\pi$, are expressed by
\begin{equation}\label{eq:1ff}
\widetilde g_{xx}=\widetilde g_{yy}=\tilde{z}^2, \quad \widetilde g_{xy}=0,
\end{equation}
and
\begin{equation}\label{eq:2ff}
\widetilde \sigma_{xx}= \frac12 \sin \beta \cos \theta, \quad \widetilde \sigma_{xy}=\frac12 \sin \beta \sin\theta,  \quad \widetilde \sigma_{yy}=-\frac12 \sin \beta\cos\theta.
\end{equation}

Let now $(s,t)$ be the local coordinates for the helicoidal surfaces $\mathrm{Hel}_c^h$, $0\leq c<1/2$ and $h\geq 0$, $h\neq 1$, given by \eqref{eq:Xh} when the profile curves are the spherical curves $\xi^h_c$ described in Corollary \ref{cor:deform Hel}.

Next we are going to use coordinates $(\tilde z,\tilde t)$ for $(\mathrm{Cat}_{\beta})_\theta$ and $(z,t)$ for $\mathrm{Hel}_c^h$, where $\tilde z$ and $z$ come from \eqref{dif} when $\mathcal{K}(\tilde z)$ and $\mathcal{K}(z)$ are respectively given in \eqref{eq:Kcat} and \eqref{eq:K hel min-}.

We define a map $\Phi$ given by $(\tilde z,\tilde t)\mapsto (z,t)$ from  $(\mathrm{Cat}_{\beta})_\theta$  onto $\mathrm{Hel}_c^h$  such that
\begin{equation}\label{eq:Phi1}
\tilde z(z)=\sqrt{\frac{(1-h^2)z^2+h^2}{1+h^2}}
\end{equation}
and
\begin{equation}\label{eq:Phi2+}
d\tilde t=\frac{hc\sqrt{1+h^2}}{z \,\sqrt{(1-h^2)z^2+h^2}\, \sqrt{z^2-z^4-c^2}}\,dz+\sqrt{1+h^2}\,dt,\ \text{if $0<h<1$},
\end{equation}
or
\begin{equation}\label{eq:Phi2-}
	d\tilde t=-\frac{hc\sqrt{1+h^2}}{z \,\sqrt{(1-h^2)z^2+h^2}\, \sqrt{z^2-z^4-c^2}}\,dz-\sqrt{1+h^2}\,dt,\ \text{if $h>1$}.
\end{equation}

 %
All the above changes of local parameters are summarized in the following diagram:
\begin{equation}\label{eq:diagram}
	\xymatrix{
		(x,y) \ar[d]^{(X^0(\mathcal C_\beta))_\theta} \ar[r]^{\eqref{eq:conpar}} &  (\tilde s, \tilde t) \ar[r]^{\eqref{dif}}_{\eqref{eq:Kcat}} & (\tilde z, \tilde t) \ar[r]^{\Phi}_{
			\begin{minipage}{1cm}
				\centering \scriptsize \eqref{eq:Phi1} \\ \eqref{eq:Phi2+} \\ \eqref{eq:Phi2-}
			\end{minipage}
		}
		 & (z,t) \ar[r]^{\eqref{dif}}_{\eqref{eq:K hel min-}} & (s,t) \ar[d]^{X^{h}(\xi^h_c)}\\
		(\mathrm{Cat}_\beta)_\theta &                     &                             &             & \mathrm{Hel}_c^h	
	}
\end{equation}

Our aim is to obtain the coefficients of the first fundamental form $g$ and the second fundamental form $\sigma$ of $\mathrm{Hel}_c^h$ in the coordinates $(x,y)$ and  show that they coincide with~\eqref{eq:1ff} and~\eqref{eq:2ff} respectively when $h$, $c$, $\beta$ and $\theta$ satisfy~\eqref{eq:deform min}.

Assume first that $0<h<1$. From \eqref{eq:diagram} (using \eqref{eq:Phi2+}) and taking into account \eqref{eq:deform min}, we compute the partial derivatives of  $(x,y)$ with respect to $s$ and $t$,  obtaining
 \begin{align}\label{eq:partial xy}
  x_s=&\frac{dx}{d\tilde s}\frac{d\tilde s}{d\tilde z} \frac{d \tilde z}{dz}\frac
  {d z}{ds} = \frac{\sqrt{1+h^2}\,z^2}{\sqrt{h^2+(1-h^2)z^2}\,\sqrt{ c^2h^2+z^4}}, \nonumber
  \\
   x_t=&0,\nonumber \\  \\
  y_s&=\frac{dy}{d\tilde t}\frac{d\tilde t}{dz}\frac{dz}{ds} =\frac{ h c\sqrt{1+h^2} }{\sqrt{h^2+(1-h^2)z^2}\,\sqrt{ c^2h^2+z^4}},  \nonumber
  \\ y_t&=\frac{dy}{d\tilde t}\frac{d\tilde t}{dt}=\sqrt{1+h^2}.  \nonumber
 \end{align}

Denote by $g_{xx}$, $g_{xy}$ and $g_{yy}$ the coefficients of the first fundamental form of $\mathrm{Hel}_c^h$ through the parameters $(x,y)$ whilst $g_{ss}$, $g_{st}$, and $g_{tt}$ denote the entries of $g$ through the parameters $(s,t)$. A straightforward computation provides the following  relation between them:
\[\begin{cases}
	g_{ss}
	=x_s^2 \, g_{xx}+2x_sy_s\, g_{xy}+y_s^2\,g_{yy}\\
	g_{st}
	=x_sy_t\,g_{xy}+y_sy_t\,g_{yy}\\
	g_{tt}
	=y_t^2\, g_{yy}
\end{cases}\]
 Using~\eqref{eq:metric g} and \eqref{eq:partial xy}, we get that
\begin{eqnarray*}
g_{xx}&=&
\frac{1}{x_s^2}\big( g_{ss}-\frac{2y_s}{y_t}g_{st}+ \frac{y_s^2}{y_t^2}g_{tt}\big)=\frac{h^2+(1-h^2)z^2}{1+h^2},\\
g_{xy}&=&
\frac{1}{x_sy_t}\big(g_{st}- \frac{y_s}{y_t}g_{tt}\big)=0,\\
g_{yy}&=&
\frac{1}{y_t^2}g_{tt}=\frac{h^2+(1-h^2)z^2}{1+h^2}.
\end{eqnarray*} 
From \eqref{eq:Phi1}  we deduce that $g$ coincides with $\widetilde g$ as desired, see~\eqref{eq:1ff}. This ensures that the map $\Phi$ is a local isometry. To conclude the result, we now check that the second fundamental forms of $(\mathrm{Cat}_{\beta})_\theta$ and $\mathrm{Hel}_c^h$  also coincide. Denote by $\sigma_{xx}$, $\sigma_{xy}$, and $\sigma_{yy}$ the coefficients of the second fundamental form of $\mathrm{Hel}_c^h$ through the parameters $(x,y)$ whilst $\sigma_{ss}$, $\sigma_{st}$, and $\sigma_{tt}$ denote the entries of $\sigma$ through the parameters $(s,t)$. We arrive at the corresponding relations:
$$\begin{cases}
	\sigma_{ss}=x_s^2\, \sigma_{xx}+2x_sy_s\,\sigma_{xy}+y_s^2\,\sigma_{yy},\\
	\sigma_{st}=x_sy_t\, \sigma_{xy}+y_sy_t\,\sigma_{yy},\\
	\sigma_{tt}=y_t^2\, \sigma_{yy},
\end{cases}$$
Then, using \eqref{eq:2ff hij} and \eqref{eq:partial xy} taking \eqref{eq:deform min} into account,  we obtain that 
\begin{eqnarray*}
\sigma_{xx}&=&
\frac{1}{x_s^2}\big( \sigma_{ss}-\frac{2y_s}{y_t}\sigma_{st}+ \frac{y_s^2}{y_t^2}\sigma_{tt}\big)=c\,\frac{1-h^2}{1+h^2}=\frac 1 2 \sin\beta \cos\theta,\\
\sigma_{xy}&=&
\frac{1}{x_sy_t}\big(\sigma_{st}- \frac{y_s}{y_t}\sigma_{tt}\big)=\frac{ h}{1+h^2}=\frac{1}{2}\sin\beta\sin\theta, \\
\sigma_{yy}&=&
\frac{1}{y_t^2}\sigma_{tt}=-c\,\frac{1-h^2}{1+h^2}=-\frac{1}{2}\sin\beta\cos\theta.
\end{eqnarray*}
 Therefore, using the fact that surfaces are uniquely determined by their first and second fundamental forms, we can conclude that $(\mathrm{Cat}_{\beta})_\theta$ and $\mathrm{Hel}^h_c$ are congruent surfaces.
 
 Assume now that $h>1$; in that case, from \eqref{eq:Phi2-}, we have that 
 \begin{align}\label{eq:partial xy2}
 	x_s=& -\frac{\sqrt{1+h^2}\, z^2}{\sqrt{h^2+(1-h^2)z^2}\,\sqrt{ c^2h^2+z^4}} , \nonumber
 	\\
 	x_t=&0, \nonumber \\  \\
 	y_s&=-\frac{ h c\sqrt{1+h^2} }{\sqrt{h^2+(1-h^2)z^2}\,\sqrt{ c^2h^2+z^4}},  \nonumber
 	\\ y_t&=-\sqrt{1+h^2}. \nonumber
 \end{align}

Following similar computations using now \eqref{eq:partial xy2}, we reach the same expressions for the entries of the first and second fundamental forms that the corresponding to the case of $0<h<1$. Therefore  we conclude again that $(\mathrm{Cat}_{\beta})_\theta$ and $\mathrm{Hel}^h_c$ are congruent surfaces. 
\end{proof}

\begin{remark}\label{Pablo}
	In the same way that we can find compact catenoids $\mathrm{Cat}_{\beta_q} $, $q\in \Q\cap(\frac{1}{2},\frac{\sqrt{2}}{2}) $ (see Definition \ref{def:catenoid} and Corollary \ref{cor:catenary}), it is to be expected that the same happens in the family of minimal helicoids $\mathrm{Hel}_c^h$,  $0\leq c<1/2$, $h\geq 0$, $h\neq 1$.
	But it is not an easy problem to determine even when their $z$-function is periodic.  
\end{remark}


\end{document}